\newtheorem{Thm}{Theorem}[section]
\newtheorem{Lem}[Thm]{Lemma}
\newtheorem{Prop}[Thm]{Proposition}
\newtheorem{Rem}[Thm]{Remark}
\title{Symplectic orbits of unimodular rows}
\author{Tariq Syed  \\
	Department of Mathematics\\
	University of Southern California\\
	3620 South Vermont Ave.\\
	Los Angeles, CA 90089, United States\\
	tariq.syed@gmx.de}
\date{\today}
\begin{document}

\maketitle

\begin{abstract}
For a smooth affine algebra $R$ of dimension $d \geq 3$ over a field $k$ and an invertible alternating matrix $\chi$ of rank $2n$, the group $Sp(\chi)$ of invertible matrices of rank $2n$ over $R$ which are symplectic with respect to $\chi$ acts on the right on the set $Um_{2n}(R)$ of unimodular rows of length $2n$ over $R$. In this paper, we prove that $Sp(\chi)$ acts transitively on $Um_{2n}(R)$ if $k$ is algebraically closed, $d! \in k^{\times}$ and $2n \geq d$.\\
2010 Mathematics Subject Classification: 19A13, 13C10, 19G38, 14F42.\\ Keywords: symplectic group, projective module, stably free module, Vaserstein symbol, cancellation.
\end{abstract}

\tableofcontents

\section{Introduction}
Let $R$ be a commutative ring. For $n \geq 1$, we let $Um_{n}(R)$ denote the set of unimodular rows of length $n$ over $R$, i.e., the set of row vectors $(a_{1},...,a_{n})$ with $a_{i} \in R$, $1 \leq i \leq n$, such that $\langle a_{1},...,a_{n} \rangle = R$. The group $GL_{n}(R)$ of invertible matrices of rank $n$ over $R$ and hence any subgroup of $GL_{n}(R)$ act on the right on $Um_{n}(R)$; in particular, if $\chi$ is an invertible alternating matrix of rank $2n$, then the group $Sp(\chi)$ of invertible matrices of rank $2n$ which are symplectic with respect to $\chi$ acts on the right on $Um_{2n}(R)$.\\
Now let $R$ be a smooth affine algebra of dimension $d \geq 3$ over a field $k$. It follows from \cite[Chapter IV, Theorem 3.4]{HB} that $E_{n}(R)$ acts transitively on $Um_{n}(R)$ if $n \geq d+2$. The main result in \cite{S1} implies that $SL_{n}(R)$ acts transitively on $Um_{n}(R)$ if $k$ is algebraically closed and $n = d+1$; similarly, the main result in \cite{S4} implies that $SL_{n}(R)$ acts transitively on $Um_{n}(R)$ if $c.d.(k) \leq 1$, $d! \in k^{\times}$ and $n = d+1$. Furthermore, it was proven in \cite{FRS} that $SL_{n}(R)$ acts transitively on $Um_{n}(R)$ if $k$ is algebraically closed, $(d-1)! \in k^{\times}$ and $n = d$.\\
Now let $n \geq 1$ and let $\chi$ be an invertible alternating matrix of rank $2n$. It follows from \cite[Lemma 5.5]{SV} that $Sp(\chi)$ acts transitively on $Um_{2n}(R)$ if $2n \geq d+2$. In this paper, we investigate the transitivity of the groups $Sp(\chi)$ on $Um_{2n}(R)$ whenever $2n = d$ or $2n = d+1$. We prove the following result (cf. Theorem \ref{Trans1} and Theorem \ref{Trans2} in the text):\\

\textbf{Theorem.} Let $R$ be a smooth affine algebra of dimension $d \geq 3$ over a field $k$ with $d! \in k^{\times}$. For $n \geq 1$, let $\chi$ be an invertible alternating matrix of rank $2n$. Assume that one of the following conditions is satisfied:
\begin{itemize}
\item $2n = d+1$, $d+1 \equiv 2~mod~4$ and $c.d.(k) \leq 1$
\item $2n = d+1$, $d+1 \equiv 0~mod~4$, $c.d.(k) \leq 1$ and $k$ is perfect
\item $2n = d$ and $k$ is algebraically closed
\end{itemize}
Then $Sp(\chi)$ acts transitively on $Um_{2n}(R)$.
\\\\
If $\chi = \psi_{2n}$, then $Sp (\chi)$ is just the usual symplectic group $Sp_{2n}(R)$; in the situation of the theorem above, it was already established in \cite{G} and \cite{Sy2} that $Sp_{d} (R)$ acts transitively on $Um_{d}(R)$ if $d \geq 4$ is even and $k$ is algebraically closed. Similar results in odd dimensions were proven in \cite{BCR}.\\
The transitivity results above have immediate implications on the injectivity of the generalized Vaserstein symbol introduced and studied in \cite{Sy} and \cite{Sy2}: Recall that if $R$ is a commutative ring, $P_{0}$ is a projective $R$-module of rank $2$ with a fixed trivialization $\theta_{0}: R \xrightarrow{\cong} \det(P_{0})$ of its determinant, the author has introduced a generalized Vaserstein symbol

\begin{center}
$V_{\theta_{0}}: Um (P_{0} \oplus R)/E (P_{0} \oplus R) \rightarrow \tilde{V}(R)$
\end{center}

associated to $P_{0}$ and $\theta_{0}$. Here $Um (P_{0} \oplus R)$ denotes the set of epimorphisms $P_{0} \oplus R \rightarrow R$ and $E (P_{0} \oplus R)$ is the subgroup of the group $Aut (P_{0} \oplus R)$ of automorphisms of $P_{0} \oplus R$ generated by elementary automorphisms. The group $\tilde{V} (R)$ is canonically isomorphic to the elementary symplectic Witt group $W_E (R)$ (cf. \cite[\S 3]{SV}). The generalized Vaserstein symbol descends to a map

\begin{center}
$V_{\theta_{0}}: Um (P_{0} \oplus R)/SL (P_{0} \oplus R) \rightarrow \tilde{V}_{SL}(R)$,
\end{center}

where $SL (P_{0} \oplus R)$ is the group of automorphisms of $P_{0} \oplus R$ of determinant $1$ and $\tilde{V}_{SL}(R)$ is the cokernel of a hyperbolic map $SK_{1}(R) \rightarrow \tilde{V}(R)$. Criteria for the injectivity and surjectivity of this map are given in \cite{Sy2}. As a corollary of the theorem above, we obtain the following result (cf. Theorem \ref{T3.9}):\\

\textbf{Theorem.} Let $R$ be a smooth affine algebra of dimension $d \geq 3$ over an infinite field $k$ with $d! \in k^{\times}$. Let $\theta_{0}: R \xrightarrow{\cong} \det (R^2)$ be a fixed trivialization of $\det (R^2)$. Assume that one of the following conditions is satisfied:
\begin{itemize}
\item $d=3$, $k$ is perfect and $c.d.(k) \leq 1$
\item $d=4$ and $k$ is algebraically closed
\end{itemize}
Then $V_{\theta_{0}}: Um_{3}(R)/SL_{3}(R) \xrightarrow{\cong} \tilde{V}_{SL}(R)$ is a bijection.
\\\\
The second case in the theorem strengthens the main result in \cite{Sy2}: In the second case of the theorem, the orbit space $Um_{3}(R)/SL_{3}(R)$ is trivial if and only if the group $\tilde{V}_{SL}(R)$ is trivial (cf. \cite[Theorem 3.19]{Sy2}).\\
In fact, if we consider $3$-dimensional affine algebras over a finite field $\mathbb{F}_{q}$, the criterion for the injectivity of the generalized Vaserstein symbol in \cite{Sy2} allows us to prove the more general result (cf. Theorem \ref{T3.11} in the text):\\

\textbf{Theorem.} Assume that $R$ is an affine algebra of dimension $d=3$ over a finite field $\mathbb{F}_{q}$. Let $P_{0}$ be a projective $R$-module of rank $2$ with a trivialization $\theta_{0}: R \xrightarrow{\cong} \det(P_{0})$ of its determinant. Then the generalized Vaserstein symbol descends to a bijection $V_{\theta_{0}}: Um (P_{0} \oplus R)/SL (P_{0} \oplus R) \xrightarrow{\cong} \tilde{V}_{SL} (R)$.\\\\
The organization of the paper is as follows: In Section \ref{2.1} we introduce the groups $W_E (R)$ and $W_{SL}(R)$ for a commutative ring $R$ and discuss the Karoubi periodicity sequence. Section \ref{2.2} serves as a brief introduction to $\mathbb{A}^{1}$-homotopy theory and Suslin matrices. In Section \ref{2.3} we recall the definition and properties of the generalized Vaserstein symbol. Finally, we prove the main results of this paper in Section \ref{3}.

\subsection*{Acknowledgements}
The author would like to thank Jean Fasel for many helpful discussions and for his support. Furthermore, he would like to thank Ravi Rao for very helpful comments. The author was funded by the Deutsche Forschungsgemeinschaft (DFG, German Research Foundation) - Project number 461453992.%Moreover, the author would like to thank Aravind Asok and Paul Arne {\O}stv{\ae}r for helpful comments.

\section{Preliminaries}\label{Preliminaries}\label{2}

Let $R$ be a commutative ring. We denote by $Um_{n} (R)$ the set of unimodular rows of length $n$ over $R$, i.e., row vectors $(a_{1},...,a_{n})$ with $a_{i} \in R$, $1 \leq i \leq n$, such that $\langle a_{1},...,a_{n} \rangle=R$; similarly, we denote by $Um_{n}^{t} (R)$ the set of unimodular columns of length $n$ over $R$, i.e., column vectors ${(a_{1},..,a_{n})}^{t}$ with $(a_{1},...,a_{n}) \in Um_{n}(R)$. We let $GL_{n} (R)$ be the group of invertible matrices of rank $n$ over $R$. Moreover, we let $SL_{n} (R)$ be its subgroup of matrices of determinant $1$ and $E_{n} (R)$ be its subgroup generated by elementary matrices. If $\chi$ is an alternating invertible matrix of rank $2n$, then we let $Sp (\chi)$ be the subgroup of $SL_{2n} (R)$ consisting of all the matrices which are symplectic with respect to $\chi$. The group $GL_{n}(R)$ and hence all its subgroups mentioned above act on the right on $Um_{n} (R)$ and on the left on $Um_{n}^{t}(R)$. We will usually denote by $\pi_{i,n}$ the unimodular row $(0,...,0,1,0,...,0)$ of length $n$ with $1$ in the $i$th slot and $0$'s elsewhere and we let $\pi_{n} = \pi_{n,n}$. Similarly, we let $e_{i,n} = \pi_{i,n}^{t} \in Um_{n}^{t}(R)$ and $e_{n} = \pi_{n}^{t} \in Um_{n}^{t}(R)$.\\
More generally, let $P$ be a finitely generated projective $R$-module. Then we denote by $Um (P)$ the set of epimorphisms $P \rightarrow R$ and by ${Um}^{t}(P)$ the set of unimodular elements of $P$. Furthermore, we denote by $Aut(P)$ the group of automorphisms of $P$ and by $SL (P)$ its subgroup of automorphisms of determinant $1$. Moreover, we let $E (P)$ denote the subgroup of $Aut (P)$ generated by transvections. If $\chi$ is a non-degenerate alternating form on $P$, then we denote by $Sp(\chi)$ the subgroup of $Aut(P)$ consisting of automorphisms of $P$ which are symplectic with respect to $\chi$.\\
If $P = R^n$ for $n \geq 1$, we can identify $Um (P)$ with $Um_{n}(R)$, $Um^{t}(P)$ with $Um_{n}^{t}(R)$, $Aut(P)$ with $GL_{n}(R)$, $SL(P)$ with $SL_{n}(R)$ and $E(P)$ with $E_{n}(R)$; for the last statement, see \cite{BBR}. The right action of $Aut(R^n)$ and its subgroups on $Um(R^n)$ then just corresponds to matrix multiplication; analogously, the left action of $Aut(R^n)$ and its subgroups on $Um^{t} (R^n)$ is also just given by matrix multiplication. Furthermore, we can also identify non-degenerate alternating forms on $P$ with invertible alternating matrices of rank $n$; in particular, our definition of $Sp(\chi)$ for a non-degenerate alternating form $\chi$ on $R^n$ then coincides under these identifications with the definition of $Sp(\chi)$ above if we interpret $\chi$ as an invertible alternating matrix of rank $n$.

\subsection{The groups $W_E (R)$ and $W_{SL}(R)$}\label{The groups $W_E (R)$ and $W_{SL}(R)$} \label{2.1}

Let $R$ be a commutative ring. For $n \geq 1$, we denote by $A_{2n}(R)$ the set of invertible alternating matrices of rank $2n$. We inductively define $\psi_{2n} \in  A_{2n} (R)$ by
 
\begin{center}
$\psi_2 =
\begin{pmatrix}
0 & 1 \\
- 1 & 0
\end{pmatrix}
$
\end{center}
 
\noindent and $\psi_{2n+2} = \psi_{2n} \perp \psi_2$. There is a canonical embedding $A_{2m} (R) \rightarrow A_{2n} (R)$, $M \mapsto M \perp \psi_{2n-2m}$ for any $m < n$. We let $A (R)$ be the direct limit of the sets $A_{2n} (R)$ under these embeddings. Furthermore, we call two alternating invertible matrices $M \in A_{2m} (R)$ and $N \in A_{2n} (R)$ equivalent, $M \sim N$, if there is an integer $s \geq 1$ and a matrix $E \in E_{2n+2m+2s}(R)$ such that

\begin{center}
$M \perp \psi_{2n+2s} = E^{t} (N \perp \psi_{2m+2s}) E$.
\end{center}

This relation defines an equivalence relation on $A(R)$ and the corresponding set of equivalence classes $A(R)/{\sim}$ is denoted $W'_E (R)$. Since
 
\begin{center}
$
\begin{pmatrix}
0 & id_{s} \\
id_{r} & 0
\end{pmatrix}
\in E_{r+s} (R)$
\end{center}
 
for even $rs$, it immediately follows that the orthogonal sum equips $W'_E (R)$ with the structure of an abelian monoid. It is proven in \cite[\S 3]{SV} that this abelian monoid is actually an abelian group: An inverse for an element of $W'_E (R)$ represented by a matrix $N \in A_{2n} (R)$ is given by the element represented by the matrix $\sigma_{2n} N^{-1} \sigma_{2n}$, where the matrices $\sigma_{2n}$ are inductively defined by
 
\begin{center}
$\sigma_2 =
\begin{pmatrix}
0 & 1 \\
1 & 0
\end{pmatrix}
$
\end{center}
 
\noindent and $\sigma_{2n+2} = \sigma_{2n} \perp \sigma_2$.
One can assign to any alternating invertible matrix $M$ an element $\mathit{Pf}(M)$ of $R^{\times}$ called the Pfaffian of $M$. The Pfaffian satisfies the following formulas for all integers $m,n \geq 1$:
 
\begin{itemize}
\item $\mathit{Pf}(M \perp N) = \mathit{Pf}(M) \mathit{Pf}(N)$ for all $M \in A_{2m} (R)$ and $N \in A_{2n} (R)$
\item $\mathit{Pf}(G^{t} N G) = \det (G) \mathit{Pf}(N)$ for all $G \in GL_{2n} (R)$ and $N \in A_{2n} (R)$
\item ${\mathit{Pf} (N)}^{2} = \det (N)$ for all $N \in A_{2n} (R)$
\item $\mathit{Pf} (\psi_{2n}) = 1$
\end{itemize}

\noindent It follows directly from these formulas that the Pfaffian determines a group homomorphism $\mathit{Pf}: W'_E (R) \rightarrow R^{\times}$; its kernel $W_E (R) := \ker (\mathit{Pf})$ is called the elementary symplectic Witt group of $R$. The homomorphism $\mathit{Pf}: W'_E (R) \rightarrow R^{\times}$ is split by the homomorphism $R^{\times} \rightarrow W'_E (R)$ which assigns to any $t \in R^{\times}$ the class in $W'_E (R)$ represented by the matrix

\begin{center}
$\begin{pmatrix}
0 & t \\
-t & 0
\end{pmatrix}
$.
\end{center}

Hence there is an isomorphism $W'_E (R) \cong W_E (R) \oplus R^{\times}$.\\
The group $W'_E (R)$ fits into an exact Karoubi periodicity sequence

\begin{center}
$K_{1}{Sp} (R) \xrightarrow{f} K_{1} (R) \xrightarrow{H} W'_{E} (R) \xrightarrow{\eta} K_{0}{Sp} (R) \xrightarrow{f'} K_{0} (R)$.
\end{center}

The homomorphisms $f$ and $f'$ are both forgetful homomorphisms, i.e., $f$ and $f'$ are induced by the obvious inclusions $Sp_{2n} (R) \rightarrow GL_{2n} (R)$ and the assignment $(P, \varphi) \mapsto P$ for any skew-symmetric space $(P, \varphi)$ respectively. Moreover, the homomorphism $K_{1} (R) \xrightarrow{H} W'_{E} (R)$ is induced by the assignment $M \mapsto M^{t} \psi_{2n} M$ for all $M \in GL_{2n} (R)$. Finally, the homomorphism $W'_{E} (R) \xrightarrow{\eta} K_{0}{Sp} (R)$ is induced by the assignment $M \mapsto [R^{2n}, M] - [R^{2n},\psi_{2n}]$ for all $M \in A_{2n} (R)$.\\% A direct proof that these assignments give well-defined homomorphisms and the sequence above is exact can be found in \cite[Section 2]{F}.\\
Since the image of $K_{1}{Sp} (R)$ under $f$ in $K_{1} (R)$ lies in $SK_{1} (R)$, one can rewrite the sequence above as

\begin{center}
$K_{1}{Sp} (R) \xrightarrow{f} SK_{1} (R) \xrightarrow{H} W_{E} (R) \xrightarrow{\eta} K_{0}{Sp} (R) \xrightarrow{f'} K_{0} (R)$.
\end{center}

The group $W_{SL}(R)$ is the cokernel of the homomorphism $SK_{1} (R) \xrightarrow{H} W_E (R)$. We now characterize invertible matrices $\varphi \in GL_{2n}(R)$ whose class in $K_{1}(R)$ lies in the kernel of $H$:

\begin{Lem}\label{L2.1}
Let $R$ be a commutative ring and let $\chi \in A_{2n} (R)$. If $\varphi \in GL_{2n} (R)$ such that its class $[\varphi] \in K_{1} (R)$ lies in $\ker (H)$, then there are $m \in \mathbb{N}$ and $\varphi' \in SL_{2n+2m} (R)$ such that $[\varphi] = [\varphi'] \in K_1 (R)$ and $\varphi'$ is symplectic with respect to $\chi \perp \psi_{2m}$.
\end{Lem}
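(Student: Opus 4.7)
The plan is to produce $\varphi'$ in two steps: first exhibit a matrix $\tilde{\varphi}$ which is symplectic with respect to $\psi_{2n+2s}$ for some $s \geq 1$ and represents $[\varphi]$ in $K_{1}(R)$, then transport the symplecticness from $\psi_{2n+2s}$ to $\chi \perp \psi_{2(n+s)}$ by conjugation with an elementary block-swap matrix.

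For the first step, I would unfold the hypothesis $[\varphi] \in \ker(H)$: since $H$ is induced by the assignment $M \mapsto M^{t}\psi_{2k}M$, this says $[\varphi^{t}\psi_{2n}\varphi] = 0$ in $W'_{E}(R)$. By the very definition of $\sim$ on $A(R)$, there then exist $s \geq 1$ and $E \in E_{2n+2s}(R)$ with
\begin{center}
$(\varphi \perp I_{2s})^{t}\psi_{2n+2s}(\varphi \perp I_{2s}) = E^{t}\psi_{2n+2s}E.$
\end{center}
Setting $\tilde{\varphi} := (\varphi \perp I_{2s})E^{-1}$, the identity above immediately yields $\tilde{\varphi} \in Sp(\psi_{2n+2s})$; since $[E] = 0$ in $K_{1}(R)$, I also obtain $[\tilde{\varphi}] = [\varphi]$.

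For the second step, I would pass to the stabilization $\tilde{\varphi} \perp I_{2n} \in GL_{4n+2s}(R)$, which lies in $Sp(\psi_{2n+2s} \perp \chi)$ because $I_{2n} \in Sp(\chi)$. Let $P \in GL_{4n+2s}(R)$ be the block-permutation matrix $\begin{pmatrix} 0 & I_{2n+2s} \\ I_{2n} & 0 \end{pmatrix}$ interchanging the coordinate blocks of sizes $2n$ and $2n+2s$; a direct block computation shows $P^{t}(\psi_{2n+2s} \perp \chi)P = \chi \perp \psi_{2n+2s}$, and since the product $2n(2n+2s)$ is even, $P$ lies in $E_{4n+2s}(R)$ by the formula recalled just after the definition of $W'_{E}(R)$. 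Setting $\varphi' := P^{-1}(\tilde{\varphi} \perp I_{2n})P$ and $m := n+s$, the identity $P^{t}(\psi_{2n+2s} \perp \chi)P = \chi \perp \psi_{2n+2s}$ combined with $\tilde{\varphi} \perp I_{2n} \in Sp(\psi_{2n+2s} \perp \chi)$ yields $\varphi' \in Sp(\chi \perp \psi_{2m})$; the Pfaffian identity $Pf(\varphi'^{t}(\chi \perp \psi_{2m})\varphi') = \det(\varphi') \cdot Pf(\chi \perp \psi_{2m})$ together with invertibility of $Pf(\chi \perp \psi_{2m})$ forces $\det(\varphi') = 1$, so $\varphi' \in SL_{2n+2m}(R)$; and commutativity of $K_{1}(R)$ with $[P] = 0$ gives $[\varphi'] = [\tilde{\varphi}] = [\varphi]$.

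The delicate point is the second step: a more naive approach would look for an intertwiner between $\chi \perp \psi_{2m}$ and $\psi_{2n+2m}$, but such an intertwiner requires $Pf(\chi) = 1$ (equivalently, $[\chi] = 0$ in $W'_{E}(R)$), which need not hold. The block-swap trick sidesteps this obstruction by keeping $\chi$ explicitly present on both sides of the conjugation, so no stable equivalence between $\chi$ and $\psi_{2n}$ is required.
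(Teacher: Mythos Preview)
Your proof is correct and follows essentially the same approach as the paper's. Both arguments first unwind the hypothesis $[\varphi]\in\ker(H)$ to obtain an element of $Sp(\psi_{2n+2s})$ representing $[\varphi]$ in $K_1(R)$, and then stabilize by a further $2n$ to accommodate $\chi$ and obtain a matrix symplectic for $\chi\perp\psi_{2(n+s)}$. The only cosmetic difference is in the second step: the paper places $\chi$ on the left from the start and uses the Whitehead identity $id_{2n}\perp\varphi\perp id_{2s}=(\varphi\perp id_{2n+2s})(\varphi^{-1}\perp\varphi\perp id_{2s})$ together with $\varphi^{-1}\perp\varphi\in E$ to move $\varphi$ into the first block, whereas you place $\chi$ on the right and then conjugate by the elementary block-swap $P$ to bring it to the front. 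Both devices are standard ways of permuting blocks by elementary matrices, and both yield $m=n+s$.
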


\begin{proof}
By definition of $W'_E (R)$, one obtains $s \in \mathbb{N}$ and $\varphi_1 \in E_{2n+2s} (R)$ such that

\begin{center}
$\varphi_1^t ({\varphi}^{t} \perp id_{2s}) \psi_{2n+2s} (\varphi \perp id_{2s}) \varphi_1 = \psi_{2n+2s}$.
\end{center}

In particular, one obtains

\begin{center}
$(id_{2n} \perp \varphi_1^t) (id_{2n} \perp {\varphi}^{t} \perp id_{2s}) (\chi \perp \psi_{2n+2s}) (id_{2n} \perp \varphi \perp id_{2s}) (id_{2n} \perp \varphi_1) =$\\$\chi \perp \psi_{2n+2s}$
\end{center}

and, since $id_{2n} \perp \varphi \perp id_{2s} = (\varphi \perp id_{2n+2s}) (\varphi^{-1} \perp \varphi \perp id_{2s})$,

\begin{center}
$(id_{2n} \perp \varphi_1^t) {(\varphi^{-1} \perp \varphi \perp id_{2s})}^{t} {(\varphi \perp id_{2n+2s})}^{t} (\chi \perp \psi_{2n+2s}) (\varphi \perp id_{2n+2s})$\\$(\varphi^{-1} \perp \varphi \perp id_{2s})(id_{2n} \perp \varphi_1) = \chi \perp \psi_{2n+2s}$
\end{center}

We set $m = n+s$ and $\varphi' = (\varphi \perp id_{2n+2s})(\varphi^{-1} \perp \varphi \perp id_{2s})(id_{2n} \perp \varphi_1)$. Since $\varphi \perp {\varphi}^{-1} \in E_{4n} (R)$, this proves the statement.
\end{proof}

\subsection{Motivic homotopy theory and Suslin matrices}\label{Motivic homotopy theory and Suslin matrices}\label{2.2}

Let $k$ be a field. We consider the category $Sm_{k}$ of smooth separated schemes of finite type over $k$ and we let $Spc_{k} = \Delta^{op} Shv_{Nis} (Sm_{k})$ (resp. $Spc_{k,\bullet}$) be the category of (pointed) simplicial Nisnevich sheaves on $Sm_{k}$. Furthermore, we write $\mathcal{H}_{s} (k)$ (resp. $\mathcal{H}_{s,\bullet} (k)$) for the (pointed) Nisnevich simplicial homotopy category which can be obtained as the homotopy category of the injective local model structure on $Spc_{k}$ (resp. $Spc_{k,\bullet}$). Finally, we denote by $\mathcal{H} (k)$ (resp. $\mathcal{H}_{\bullet} (k)$) the unstable $\mathbb{A}^{1}$-homotopy category (cf. \cite{MV}), which can be obtained as a Bousfield localization of $\mathcal{H}_{s} (k)$ (resp. $\mathcal{H}_{s,\bullet} (k)$). We will call objects of $Spc_{k}$ (resp. $Spc_{k,\bullet}$) spaces (resp. pointed spaces).\\
If $\mathcal{X}$ and $\mathcal{Y}$ are spaces, we denote by $[\mathcal{X},\mathcal{Y}]_{\mathbb{A}^{1}} = Hom_{\mathcal{H} (k)} (\mathcal{X},\mathcal{Y})$ the set of morphisms from $\mathcal{X}$ to $\mathcal{Y}$ in $\mathcal{H} (k)$; similarly, if $(\mathcal{X},x)$ and $(\mathcal{Y},y)$ are two pointed spaces, we denote by $[(\mathcal{X},x), (\mathcal{Y},y)]_{\mathbb{A}^{1},\bullet} = Hom_{\mathcal{H}_{\bullet} (k)} ((\mathcal{X},x), (\mathcal{Y},y))$ the set of morphisms from $(\mathcal{X},x)$ to $(\mathcal{Y},y)$ in $\mathcal{H}_{\bullet} (k)$. Sometimes we will omit the basepoints from the notation.\\
The functor $Spc_{k} \rightarrow Spc_{k,\bullet}, \mathcal{X} \mapsto \mathcal{X}_{+} = \mathcal{X} \sqcup \ast$ and the forgetful functor $Spc_{k,\bullet} \rightarrow Spc_{k}$ form a Quillen pair. There are simplicial suspension and loop space functors $\Sigma_{s}: Spc_{k, \bullet} \rightarrow Spc_{k, \bullet}$ and $\Omega_{s}: Spc_{k, \bullet} \rightarrow Spc_{k, \bullet}$, which form an adjoint Quillen pair of functors. The right-derived functor of $\Omega_{s}$ will be denoted $\mathcal{R}\Omega_{s}$. If $(\mathcal{X},x)$ is a pointed space, its simplicial suspension $\Sigma_{s} (\mathcal{X},x) = S^{1} \wedge (\mathcal{X},x)$ has the structure of an $h$-cogroup in $\mathcal{H}_{\bullet} (k)$ (cf. \cite[Definition 2.2.7]{A} or \cite[Section 6.1]{Ho}); in particular, if $(\mathcal{Y},y)$ is another pointed space, there is a natural group structure on the set $[\Sigma_{s}(\mathcal{X},x), (\mathcal{Y},y)]_{\mathbb{A}^{1}, \bullet}$ induced by the $h$-cogroup structure of $\Sigma_{s} (\mathcal{X},x)$. If $(\mathcal{Y},y)$ is any pointed space, the space $\mathcal{R}\Omega_{s}(\mathcal{Y},y)$ has the structure of an $h$-group (or grouplike $H$-space in some literature) in $\mathcal{H}_{\bullet} (k)$ and hence the set $[(\mathcal{X},x), \mathcal{R}\Omega_{s}(\mathcal{Y},y)]_{\mathbb{A}^{1},\bullet}$ has a natural group structure for any pointed space $(\mathcal{X},x)$ induced by the $h$-group structure of $\mathcal{R}\Omega_{s}(\mathcal{Y},y)$.\\
For all $n \geq 1$, let $Q_{2n-1} = Spec (k[x_{1},...,x_{n},y_{1},...,y_{n}]/\langle \sum_{i=1}^{n} x_{i}y_{i} - 1 \rangle)$ be the smooth affine quadric hypersurfaces in $\mathbb{A}^{2n}$. The projection on the coefficients $x_{1},...,x_{n}$ induces a morphism of schemes $p_{2n-1}:Q_{2n-1} \rightarrow \mathbb{A}^{n}\setminus 0$ which is locally trivial with fibers isomorphic to $\mathbb{A}^{n-1}$ and hence an $\mathbb{A}^{1}$-weak equivalence. Hence we have a pointed $\mathbb{A}^{1}$-weak equivalence

\begin{center}
$\mathbb{A}^{n}\setminus 0 \simeq_{\mathbb{A}^{1}} Q_{2n-1}$
\end{center}

for all $n \geq 1$ if we equip $\mathbb{A}^{n}\setminus 0$ with $(1,0,..,0)$ and $Q_{2n-1}$ with $(1,0,..,0,1,0,..,0)$ as basepoints. Now let $R$ be an affine $k$-algebra and $X = Spec(R)$. It follows immediately from the definitions that

\begin{center}
$\mathit{Um}_{n} (R) \cong Hom_{Sch_{k}} (X, \mathbb{A}^{n}\setminus 0)$
\end{center}

and

\begin{center}
$\{(a,b)|a,b \in \mathit{Um}_{n} (R), a b^{t} = 1\} = Hom_{Sch_{k}} (X, Q_{2n-1})$,
\end{center}

where $Sch_k$ is the category of Noetherian $k$-schemes of finite Krull dimension. If $R$ is smooth, $k$ perfect with $char(k) \neq 2$ and $n \geq 3$, it follows from \cite[Remark 7.10]{Mo} and \cite[Theorem 2.1]{F} that in fact

\begin{center}
$\mathit{Um}_{n} (R)/E_{n} (R) \cong [X, \mathbb{A}^{n}\setminus 0]_{\mathbb{A}^{1}}$.
\end{center}

It is well-known that $\mathbb{A}^{n}\setminus 0$ is isomorphic to $\Sigma_{s}^{n-1} \mathbb{G}_{m}^{\wedge n}$ in $\mathcal{H}_{\bullet} (k)$ for all $n \geq 1$; therefore $\mathbb{A}^{n}\setminus 0$ and also $Q_{2n-1}$ inherit the structure of an $h$-cogroup in $\mathcal{H}_{\bullet} (k)$ for $n \geq 2$ (cf. \cite[Definition 2.2.7]{A} or \cite[Section 6.1]{Ho}). In particular, the set $[\mathbb{A}^{n} \setminus 0, \mathbb{A}^{n} \setminus 0]_{\mathbb{A}^{1},\bullet}$ has an induced group structure for $n \geq 2$. For $n \geq 3$, it is well-known that there is a group isomorphism $[\mathbb{A}^{n} \setminus 0, \mathbb{A}^{n} \setminus 0]_{\mathbb{A}^{1},\bullet} \cong GW (k)$ between this group and the Grothendieck-Witt ring of non-degenerate symmetric bilinear forms (cf. \cite[Corollary 5.43]{Mo}).\\
Now let $R$ be a commutative ring, let $n \geq 1$ and, moreover, let $a = (a_{1},...,a_{n})$, $b = (b_{1},...,b_{n})$ be row vectors of length $n$ over $R$. In \cite{S2} Suslin inductively defined matrices $\alpha_{n} (a,b)$ of size $2^{n-1}$ called Suslin matrices for all $n \geq 1$:\\
For $n=1$, one simply sets $\alpha_{1} (a,b) = (a_{1})$; for $n \geq 2$, one sets $a'=(a_{2},...,a_{n})$, $b'=(b_{2},...,b_{n})$ and then defines

\begin{center}
$\alpha_{n} (a,b) = \begin{pmatrix}
a_{1} {Id}_{2^{n-2}} & \alpha_{n-1} (a',b') \\
-{\alpha_{n-1} (b',a')}^{t} & b_{1} {Id}_{2^{n-2}}
\end{pmatrix}.$
\end{center}

In \cite[Lemma 5.1]{S2} Suslin proved the formula $\det (\alpha_{n} (a,b)) = {(a b^{t})}^{2^{n-2}}$ for all $n \geq 2$; in particular, if $a = (a_{1},...,a_{n}) \in Um_{n}(R)$ is a unimodular row and $b=(b_{1},...,b_{n}) \in Um^{t}_{n}(R)$ is a unimodular column such that $\sum_{i=1}^{n} a_{i} b_{i} = 1$ (i.e., $b$ defines a section of $a$), then $\alpha_{n} (a,b) \in SL_{2^{n-1}} (R)$.\\
Suslin constructed these matrices in order to prove that for any unimodular row $a = (a_{1},a_{2},a_{3},...,a_{n})$ of length $n \geq 3$ over $R$, the row of the form $a' = (a_{1},a_{2},a_{3},...,a_{n}^{(n-1)!})$ is completable to an invertible matrix of rank $n$. More precisely, he showed that for any $a$ with section $b$ there exists an invertible $n \times n$-matrix $\beta (a,b)$ whose first row is $a'$ such that the classes of $\beta (a,b)$ and $\alpha_{n} (a,b)$ in $K_{1} (R)$ coincide (cf. \cite[Proposition 2.2 and Corollary 2.5]{S3}).\\
Following \cite{AF}, one can interpret Suslin's construction as a morphism of spaces: Indeed, there exists a morphism $\alpha_{n}: Q_{2n-1} \rightarrow SL_{2^{n-1}}$ induced by $\alpha_{n} (x,y)$, where $x=(x_{1},...,x_{n})$ and $y=(y_{1},...,y_{n})$; if we consider $Q_{2n-1}$ and $SL_{2^{n-1}}$ pointed spaces with basepoints $(1,0,...,0,1,0,...,0)$ and the identity matrix of rank $2^{n-1}$, then this morphism is pointed. Composing with the canonical map $SL_{2^{n-1}} \rightarrow SL$, we obtain a (pointed) morphism $Q_{2n-1} \rightarrow SL$ which we also denote by $\alpha_{n}$.\\
Now let $R$ be an affine algebra over $k$. A unimodular row $a = (a_{1},...,a_{n})$ of length $n$ over $R$ together with a section $b = (b_{1},...,b_{n})$ corresponds to the morphism $Spec (R) \rightarrow Q_{2n-1}, (x_{1},...,x_{n}) \mapsto (a_{1},...,a_{n}), (y_{1},...,y_{n}) \mapsto (b_{1},...,b_{n})$. The Suslin matrix $\alpha_{n}(a,b)$ is then just the pullback of $\alpha_{n}(x,y) \in SL_{2^{n-1}}(Q_{2n-1})$ along this morphism of schemes. Analogously, the class $[\alpha_{n}(a,b)] \in SK_{1}(R)$ is then just the pullback of the class $[\alpha_{n}(x,y)] \in SK_{1}(Q_{2n-1})$ along this morphism of schemes.

\begin{Lem}\label{L2.2}
Let $R$ be an affine algebra over a perfect field $k$ and let $l \geq 1$ and $n \geq 3$. Furthermore, let $a = (a_{1},...,a_{n}) \in Um_{n} (R)$ be a unimodular row with a section $b$ and let $a' = (a_{1},...,a_{n}^{l}) \in Um_{n} (R)$ with any section $b'$. If $-1 \in {(k^{\times})}^{2}$, then $l \cdot [\alpha_{n}(a,b)] = [\alpha_{n}(a',b')] \in SK_{1} (R)$. Moreover, if $l$ is even, then $[\alpha_{n}(a',b')] \in SK_{1} (R)$ is an $\dfrac{l}{2}$-fold multiple of an element in $SK_{1}(R)$.
\end{Lem}

\begin{proof}
Since everything is pulled back from $Q_{2n-1}$ with its universal unimodular row $x=(x_{1},...,x_{n})$ and section $y=(y_{1},...,y_{n})$, we only have to prove the statement for $Q_{2n-1}$, $x$ and $y$.\\
Since $Q_{2n-1}$ is a smooth affine scheme over $k$, we can use the $\mathbb{A}^{1}$-representability of algebraic $K$-theory in $\mathcal{H}_{\bullet}(k)$. Consider the morphism $\Psi^{l}: Q_{2n-1} \rightarrow Q_{2n-1}$ in $\mathcal{H}_{\bullet}(k)$ which corresponds up to canonical pointed $\mathbb{A}^{1}$-weak equivalence to the morphism $\mathbb{A}^{n}\setminus 0 \rightarrow \mathbb{A}^{n} \setminus 0, (x_{1},...,x_{n}) \mapsto (x_{1},...,x_{n}^{l})$.\\
It is well-known that $\Psi^{l}$ corresponds to $l_{\epsilon} \in [Q_{2n-1},Q_{2n-1}]_{\mathbb{A}^{1},\bullet} \cong \mathit{GW}(k)$ with respect to the group structure on $[Q_{2n-1},Q_{2n-1}]_{\mathbb{A}^{1},\bullet}$ induced by the $h$-cogroup structure of $Q_{2n-1} \simeq_{\mathbb{A}^{1}} \Sigma_{s}^{n-1}\mathbb{G}_{m}^{\wedge n}$ in $\mathcal{H}_{\bullet}(k)$ (cf. \cite[Proposition 2.1.9]{AFH}); since $-1 \in {(k^{\times})}^{2}$, $\Psi^{l}$ corresponds to $l_{\epsilon} = l \cdot \langle 1 \rangle \in [Q_{2n-1},Q_{2n-1}]_{\mathbb{A}^{1},\bullet} \cong \mathit{GW}(k)$ and hence to $l \cdot id_{Q_{2n-1}}$ with respect to the group structure on $[Q_{2n-1},Q_{2n-1}]_{\mathbb{A}^{1},\bullet}$ induced by the $h$-cogroup structure of $Q_{2n-1} \simeq_{\mathbb{A}^{1}} \Sigma_{s}^{n-1}\mathbb{G}_{m}^{\wedge n}$ in $\mathcal{H}_{\bullet}(k)$. Therefore the morphism $\alpha_{n} \circ \Psi^{l}$ corresponds to the $l$-fold sum of $\alpha_{n}$ in $[Q_{2n-1},SL]_{\mathbb{A}^{1},\bullet}$ with respect to the group structure on $[Q_{2n-1},SL]_{\mathbb{A}^{1},\bullet}$ induced by $h$-cogroup structure of $Q_{2n-1} \simeq_{\mathbb{A}^{1}} \Sigma_{s}^{n-1}\mathbb{G}_{m}^{\wedge n}$ in $\mathcal{H}_{\bullet}(k)$. The usual Eckmann-Hilton argument implies that $\alpha_{n} \circ \Psi^{l}$ also corresponds to the $l$-fold sum of $\alpha_{n}$ in $[Q_{2n-1},SL]_{\mathbb{A}^{1},\bullet}$ with respect to the group structure on $[Q_{2n-1},SL]_{\mathbb{A}^{1},\bullet}$ induced by $h$-group structure of $SL \simeq_{\mathbb{A}^{1}} \mathcal{R}\Omega_{s}BSL$ in $\mathcal{H}_{\bullet}(k)$ (cf. \cite[Section 4.1]{MV} or \cite[Proposition 5.6]{AE} for the equivalence). Since extending $\alpha_{n} \circ \Psi^{l}$ to a pointed morphism ${(Q_{2n-1})}_{+} \rightarrow SL$ computes the Suslin matrix $\alpha_{n}(x',y')$, this proves the first statement.\\
The second statement follows completely analogously as for even $l$ one has $l_{\epsilon} = \dfrac{l}{2} \cdot h$ with $h = \langle 1 \rangle + \langle -1 \rangle \in GW (k)$.
\end{proof}

\subsection{The generalized Vaserstein symbol}\label{The generalized Vaserstein symbol}\label{2.3}

Let $R$ be a commutative ring. We let $P_{0}$ be a projective $R$-module of rank $2$ with a fixed trivialization $\theta_{0}: R \xrightarrow{\cong} \det (P_{0})$ of its determinant.\\
In \cite{Sy} the author defined a generalized Vaserstein symbol

\begin{center}
$V_{\theta_{0}}: Um (P_{0} \oplus R)/E (P_{0} \oplus R) \rightarrow \tilde{V}(R)$.
\end{center}

associated to $P_{0}$ and $\theta_{0}$. The group $\tilde{V} (R)$ is canonically isomorphic to the elementary symplectic Witt group $W_E (R)$ defined in Section \ref{2.1}.\\
If we let $P_{0} = R^{2}$ be the free $R$-module of rank $2$ and consider the canonical trivialization $\theta_{0}: R \xrightarrow{\cong} \det(R^2), 1 \mapsto e_{1} \wedge e_{2}$, where $e_{1} = (1,0)$ and $e_{2} = (0,1)$, then the generalized Vaserstein symbol associated to $R^2$ and $-\theta_{0}$ coincides with the Vaserstein symbol defined by Suslin and Vaserstein in \cite[\S 5]{SV}.\\
As proven in \cite[Theorem 3.1]{Sy2}, the generalized Vaserstein symbol descends to a map

\begin{center}
$V_{\theta_{0}}: Um (P_{0} \oplus R)/SL (P_{0} \oplus R) \rightarrow \tilde{V}_{SL}(R)$,
\end{center}

where $\tilde{V}_{SL}(R)$ is the quotient of $\tilde{V}(R)$ corresponding to $W_{SL}(R)$ under the isomorphism $W_E (R) \cong \tilde{V} (R)$; hence $\tilde{V}_{SL}(R)$ is canonically isomorphic to $W_{SL}(R)$.\\
Focusing on Noetherian rings of dimension $\leq 4$, the author also gave general criteria for the surjectivity and injectivity of the induced map above (cf. \cite[Theorems 3.2 and 3.6]{Sy2}). For the purposes of this paper, we simply state the following sufficient criterion for its bijectivity which is an immediate consequence of the criteria in \cite{Sy2}:

\begin{Prop}\label{Criterion}
Assume $R$ is a Noetherian ring of dimension $\leq 4$. For $n \geq 3$, we let $P_{n} = P_{0} \oplus R^{n-2}$ and $e_{4} = (0,0,1) \in P_{4}$. Then the induced map $V_{\theta_{0}}: Um (P_{0} \oplus R)/SL (P_{0} \oplus R) \rightarrow \tilde{V}_{SL}(R)$ is bijective if $SL (P_{5})$ acts transitively on $Um (P_{5})$ and $Um^t(P_{4}) = Sp(\chi) e_{4}$ for any non-degenerate alternating form on $P_4$.
\end{Prop}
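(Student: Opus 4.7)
The plan is to apply directly \cite[Theorems 3.2 and 3.6]{Sy2}, which provide respective sufficient criteria for the surjectivity and injectivity of the induced map $V_{\theta_{0}}: Um(P_0 \oplus R)/SL(P_0 \oplus R) \to \tilde V_{SL}(R)$ in the Noetherian dimension $\leq 4$ setting, and then to verify that the two transitivity hypotheses stated in the proposition match exactly the conditions required there.

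First I would handle surjectivity via \cite[Theorem 3.2]{Sy2}. Under the canonical isomorphism $\tilde V_{SL}(R) \cong W_{SL}(R)$ discussed in Section \ref{2.3}, and using that $R$ has Krull dimension $\leq 4$, every class in $W_{SL}(R)$ admits a representative by a non-degenerate alternating form $\chi$ on $P_4 = P_0 \oplus R^2$ (this is where the dimension bound enters, through stabilization of alternating forms together with cancellation on symplectic forms in that range). The hypothesis $Um^t(P_4) = Sp(\chi) e_4$ then permits one, after replacing $\chi$ within its $Sp$-orbit, to bring the representing form into the normalized shape used to define $V_{\theta_0}$, and through the construction recalled in Section \ref{2.3} this exhibits an explicit preimage of the given class in $Um(P_0 \oplus R)/SL(P_0 \oplus R)$.

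Next I would handle injectivity via \cite[Theorem 3.6]{Sy2}. The standard Suslin--Vaserstein style argument reduces the equality of generalized Vaserstein symbols of two elements of $Um(P_0 \oplus R)$ to an identification after stabilization in $P_5 = P_0 \oplus R^3$; the hypothesis that $SL(P_5)$ acts transitively on $Um(P_5)$ (a cancellation-type statement) lets one descend that stabilized identification back to $SL(P_0 \oplus R)$-equivalence in $Um(P_0 \oplus R)$, giving injectivity.

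The main obstacle, and indeed essentially the only real content, is bookkeeping: one must check that the two clean transitivity conditions stated in the proposition coincide with the (a priori more technical) hypotheses appearing in \cite[Theorems 3.2, 3.6]{Sy2}, and that no additional hypothesis on $R$ beyond Noetherianness and $\dim R \leq 4$ is needed for the intermediate cancellation and representation steps invoked above. Once these identifications are made, no further calculation is required, and the bijectivity of $V_{\theta_0}$ follows by combining the two cited theorems.
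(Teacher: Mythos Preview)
Your approach is exactly the one taken in the paper: the proposition is stated there as ``an immediate consequence of the criteria in \cite{Sy2}'', namely \cite[Theorems 3.2 and 3.6]{Sy2}, with no further argument given. Your proposal supplies more detail than the paper itself does; just be sure, when writing it up, that your attribution of which hypothesis feeds surjectivity and which feeds injectivity matches the actual statements of \cite[Theorems 3.2 and 3.6]{Sy2}, since that is the only place a slip could occur.
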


If $P_{0} = R^2$ is the free $R$-module of rank $2$, this criterion means that $SL_{5}(R)$ acts transitively on $Um_{5}(R)$ and $Sp (\chi)$ acts transitively on $Um_{4}^{t}(R)$ for any invertible alternating matrix $\chi$ of rank $4$ over $R$.

\section{Symplectic orbits of unimodular rows}\label{Symplectic orbits of unimodular rows}\label{3}

We finally prove the main results of this paper in this section. In order to do this, we will use the following implicit results of Suslin on orbits of unimodular rows (cf. \cite[Proof of Theorem 1]{S1} and the proof of \cite[Theorem 2.4]{S4}):

\begin{Thm}\label{T3.1}
Let $R$ be a reduced affine algebra of dimension $d \geq 3$ over an algebraically closed field $k$ and let $l$ be an integer $\geq 1$. Then any unimodular row $a = (a_{1},...,a_{d+1}) \in Um_{d+1}(R)$ of length $d+1$ over $R$ can be transformed via elementary matrices to a unimodular row of length $d+1$ over $R$ of the form $b=(b_{1},...,b_{d+1}^{l})$.
\end{Thm}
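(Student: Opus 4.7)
The strategy is to follow Suslin's approach in \cite{S1}. I would combine the algebraic Bertini theorem (to place the ideal generated by $a_{1},\ldots,a_{d}$ in general position) with the algebraic closedness of $k$ (to extract $l$-th roots in the resulting zero-dimensional quotient), and then use elementary row operations to realize the $l$-th power in the final slot of the unimodular row.

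\textbf{Step 1 (Generic position via Bertini).} Starting from $a=(a_{1},\ldots,a_{d+1})\in Um_{d+1}(R)$, I would perform elementary transformations of the form $a_{i}\mapsto a_{i}+\lambda_{i}a_{d+1}$ for $i\leq d$. By the algebraic version of Bertini's theorem (due to Swan), the elements $\lambda_{i}\in R$ may be selected so that the closed subscheme $V(a_{1},\ldots,a_{d})\subseteq X=\mathrm{Spec}(R)$ is smooth and of pure dimension $0$; note that the unimodularity of $a$ forces $V(a_{1},\ldots,a_{d})\subseteq D(a_{d+1})$, and the hypothesis $d\geq 3$ provides sufficient room for genericity.

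\textbf{Step 2 (Quotient structure).} Set $I=\langle a_{1},\ldots,a_{d}\rangle$. Since $V(I)$ is reduced and zero-dimensional over the algebraically closed field $k$, we obtain $R/I\cong k^{s}$ for some $s\geq 1$. The image $\bar{a}_{d+1}\in R/I$ is a unit by unimodularity, corresponding to a tuple $(\mu_{1},\ldots,\mu_{s})\in(k^{\times})^{s}$.

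\textbf{Step 3 ($l$-th roots and lifting).} Fix any integer $l\geq 1$. Since $k$ is algebraically closed (hence perfect), each $\mu_{j}$ has an $l$-th root $\mu_{j}^{1/l}\in k^{\times}$. Set $\bar{c}=(\mu_{1}^{1/l},\ldots,\mu_{s}^{1/l})\in R/I$, so that $\bar{c}^{l}=\bar{a}_{d+1}$. Lift $\bar{c}$ to $c\in R$; then $a_{d+1}-c^{l}\in I$, hence $a_{d+1}=c^{l}+\sum_{i=1}^{d}\nu_{i}a_{i}$ for some $\nu_{i}\in R$. Applying the elementary operations $a_{d+1}\mapsto a_{d+1}-\nu_{i}a_{i}$ successively for $i=1,\ldots,d$ transforms $a$ into $(a_{1},\ldots,a_{d},c^{l})$, which is of the desired form with $b_{i}=a_{i}$ for $i\leq d$ and $b_{d+1}=c$.

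The \textbf{main obstacle} is Step 1: realizing the Bertini improvement purely via elementary transformations in the affine algebraic setting requires a careful application of Swan's theorem, and the simultaneous achievement of reducedness and zero-dimensionality for $V(I)$ is essential for Step 2 to yield the clean decomposition $R/I\cong k^{s}$. The algebraic closedness of $k$ is used decisively in Step 3 to ensure the existence of $l$-th roots in every characteristic, and the hypothesis $d\geq 3$ provides enough dimension for the generic perturbations in Step 1 to succeed.
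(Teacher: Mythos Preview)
The paper does not supply its own proof of this theorem; it simply records it as an implicit result extracted from Suslin's argument in \cite[Proof of Theorem~1]{S1}. Your sketch is precisely a reconstruction of that argument, so there is nothing to compare against beyond Suslin's original reasoning.

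Your outline is correct and faithful to Suslin. Two remarks are worth making. First, your proof in fact establishes the slightly sharper statement that $l$ may be \emph{prescribed in advance} (independently of $a$); this is exactly what is used later in the proof of Theorem~\ref{T3.6}, where one takes $l=(d!)^{2}$. Second, your identification of Step~1 as the delicate point is accurate: to obtain $R/I\cong k^{s}$ one needs $V(I)$ to be reduced and zero-dimensional, and for this it is not enough to invoke height arguments (Eisenbud--Evans/prime avoidance) alone, since an $\mathfrak{m}$-primary complete intersection need not be radical. The additional input is that the singular locus of $X=\mathrm{Spec}(R)$ has dimension $\leq d-1$ (as $R$ is reduced over the perfect field $k$), so generic perturbations $a_i\mapsto a_i+\lambda_i a_{d+1}$ with $\lambda_i\in k$ force the finite set $V(I)\subset D(a_{d+1})$ to lie in the smooth locus and to be cut out transversally there; this is the Bertini-type statement Suslin uses. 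With that in hand, Steps~2 and~3 go through exactly as you describe.
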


\begin{Thm}\label{T3.2}
Let $R$ be a normal affine algebra of dimension $d \geq 3$ over an infinite field $k$ with $char(k) \neq 2$ and $c.d.(k) \leq 1$ and let $l$ be an integer $\geq 1$ such that $\mathit{gcd} (l,\mathit{char}(k)) = 1$. Then any unimodular row $a = (a_{1},...,a_{d+1}) \in Um_{d+1}(R)$ of length $d+1$ over $R$ can be transformed via elementary matrices to a unimodular row of length $d+1$ over $R$ of the form $b = (b_{1},...,b_{d+1}^{l})$.
\end{Thm}

Moreover, we will use the following implicit result of Fasel-Rao-Swan (cf. the proof of \cite[Theorem 7.5]{FRS}):

\begin{Thm}\label{T3.3}
Let $R$ be a normal affine algebra of dimension $d \geq 4$ over an algebraically closed field $k$ with $char(k) \neq 2$ and let $l$ be an integer $\geq 1$ such that $\mathit{gcd} (l,\mathit{char}(k)) = 1$. Then any unimodular row $a = (a_{1},...,a_{d}) \in Um_{d}(R)$ of length $d$ over $R$ can be transformed via elementary matrices to a unimodular row of length $d$ over $R$ of the form $b = (b_{1},...,b_{d}^{l})$.
\end{Thm}

We will also use stability results for $SK_1$ proven by Vaserstein (cf. \cite{V}) and Rao-van-der-Kallen  (cf. \cite[Theorem 3.4]{RvdK}) and by Fasel-Rao-Swan (cf. \cite[Corollary 7.7]{FRS}), which we summarize in the next theorem:

\begin{Thm}\label{T3.4}
Let $R$ be a smooth affine algebra of dimension $d \geq 3$ over a field $k$ with $c.d.(k) \leq 1$ and $d! \in k^{\times}$. Then $SL_{d+1} (R)/E_{d+1}(R) \rightarrow SK_{1} (R)$ is injective. If $k$ is algebraically closed, then also  $SL_{d}(R)/E_d (R) \rightarrow SK_{1} (R)$ is injective.
\end{Thm}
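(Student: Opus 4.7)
This theorem is a compilation of stability results from the literature, so my plan is not to reprove anything from scratch but rather to assemble the cited results and verify that the hypotheses on $R$ match those in each reference. The base point of the argument is the classical Bass-Vaserstein stability theorem (as presented in \cite{V}), which gives injectivity of $SL_n (R)/E_n (R) \to SK_1 (R)$ for $n \geq d+2$; this is the ambient framework but not yet the range needed here.

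For the first assertion, I would apply \cite[Theorem 3.4]{RvdK}, which improves the stability range by one dimension and yields injectivity at $n = d+1$ precisely under our hypotheses $c.d.(k) \leq 1$ and $d! \in k^{\times}$. These two conditions enter the Rao--van der Kallen argument through the vanishing of certain cohomological obstructions and the divisibility properties needed in the completion procedure. For the second assertion, the plan is to quote \cite[Corollary 7.7]{FRS}, which establishes the analogous injectivity at the genuinely non-stable level $n = d$ when $k$ is algebraically closed; the hypothesis $d! \in k^{\times}$ automatically implies that $\mathrm{char}(k) = 0$ or $\mathrm{char}(k) > d$, so the characteristic assumption of that corollary is met.

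The main obstacle is not the citation step itself but the mathematical depth of the underlying references, which we take as black boxes. The Fasel--Rao--Swan corollary relies on substantial input from their machinery, including Euler class group computations, the analysis of Suslin matrices, and techniques from $\mathbb{A}^1$-homotopy theory. The Rao--van der Kallen improvement likewise rests on Suslin's completion theorem together with delicate local-global arguments. Granting these, the proof of Theorem \ref{T3.4} reduces to matching hypotheses and reading off the conclusion.
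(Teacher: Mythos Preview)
Your proposal is correct and matches the paper's treatment exactly: the paper states Theorem~\ref{T3.4} without proof, introducing it as a summary of the stability results proven by Vaserstein, Rao--van der Kallen \cite[Theorem 3.4]{RvdK}, and Fasel--Rao--Swan \cite[Corollary 7.7]{FRS}. Your identification of which reference covers which range, and your check that the hypotheses on $k$ (in particular that $d!\in k^{\times}$ forces the characteristic bound needed in \cite{FRS}) line up, is precisely the content the paper leaves implicit.
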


The following lemma will be another ingredient for the proofs of the main results in this paper below:

\begin{Lem}\label{L3.5}
Let $R$ be a commutative ring and let $\chi_{1}$ and $\chi_{2}$ be invertible alternating matrices of rank $2n$ over $R$ such that ${\varphi}^{t} (\chi_{1} \perp \psi_{2}) \varphi = \chi_{2} \perp \psi_{2}$ holds for some $\varphi \in SL_{2n+2}(R)$. Furthermore, let $\chi = \chi_{1} \perp \psi_{2}$. If the equality $Um^{t}_{2n+2}(R) = (E_{2n+2}(R) \cap {Sp} (\chi)) e_{2n+2}$ holds, then one has ${\psi}^{t} \chi_{2} \psi = \chi_{1}$ for some $\psi \in SL_{2n}(R)$ such that $[\psi] = [\varphi] \in K_{1}(R)$.
\end{Lem}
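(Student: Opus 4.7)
The strategy is to use the transitivity hypothesis to normalize $\varphi$ so that $\varphi e_{2n+2} = e_{2n+2}$, and then to read the desired $\psi$ off the resulting block decomposition of $\varphi$. Since $\varphi e_{2n+2}$ is a unimodular column, the hypothesis will supply $\eta \in E_{2n+2}(R) \cap Sp(\chi)$ with $\eta e_{2n+2} = \varphi e_{2n+2}$. Replacing $\varphi$ by $\eta^{-1}\varphi$ preserves both $[\varphi] \in K_1(R)$ (because $\eta \in E_{2n+2}(R)$) and the equation $\varphi^t \chi \varphi = \chi_2 \perp \psi_2$ (because $\eta \in Sp(\chi)$), so I may assume $\varphi e_{2n+2} = e_{2n+2}$ outright.

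Once the last column of $\varphi$ equals $e_{2n+2}$, the shape of $\psi_2$ gives $\chi e_{2n+2} = e_{2n+1}$. Evaluating the last column of the identity $\varphi^t \chi \varphi = \chi_2 \perp \psi_2$ then forces the $(2n+1)$-th row of $\varphi$ to equal $(0, \ldots, 0, 1, 0)$, so in blocks of sizes $(2n, 1, 1) \times (2n, 1, 1)$ the matrix $\varphi$ takes the shape
\[
\varphi = \begin{pmatrix} N & u & 0 \\ 0 & 1 & 0 \\ w^t & t & 1 \end{pmatrix}
\]
for some $N \in M_{2n}(R)$, $u, w \in R^{2n}$ and $t \in R$. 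A routine block multiplication then shows that the top-left $2n \times 2n$ block of $\varphi^t \chi \varphi$ is precisely $N^t \chi_1 N$, hence $N^t \chi_1 N = \chi_2$; the off-diagonal blocks pin down $w$ in terms of $N, u, \chi_1$ but play no further role.

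To finish, cofactor expansion along the last column and last row gives $\det \varphi = \det N$, so $N \in SL_{2n}(R)$. The product $\operatorname{diag}(N^{-1}, 1, 1) \cdot \varphi$ has its non-diagonal entries confined to an L-shape (column $2n+1$ in rows $1, \ldots, 2n$, together with row $2n+2$ in columns $1, \ldots, 2n+1$); the corresponding elementary generators $E_{ij}(\ast)$ pairwise commute, so this factor lies in $E_{2n+2}(R)$, and consequently $[\varphi] = [N] \in K_1(R)$. Setting $\psi := N^{-1}$ will then yield $\psi \in SL_{2n}(R)$ with $\psi^t \chi_2 \psi = \chi_1$ and $[\psi] = [\varphi] \in K_1(R)$, as required.

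\textbf{Main obstacle.} The only nontrivial input will be the block-multiplication bookkeeping in the middle step: simultaneously extracting the shape of the $(2n+1)$-th row of $\varphi$ and isolating the identity $N^t \chi_1 N = \chi_2$ from $\varphi^t \chi \varphi = \chi_2 \perp \psi_2$ is a careful but finite computation, and it is the only place where the specific form of $\psi_2$ is used. The remainder of the argument reduces to recognizing a particular unipotent matrix as a product of commuting elementary generators and performing the corresponding factorization.
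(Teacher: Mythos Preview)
Your approach is essentially the paper's: use the transitivity hypothesis to arrange $\varphi e_{2n+2} = e_{2n+2}$, read off from the symplectic relation that the $(2n+1)$-th row is also standard, and take the top-left $2n \times 2n$ block $N$. The paper records exactly these three facts about the normalized $\varphi$ and then observes that $N$ agrees with $\varphi$ up to an element of $E_{2n+2}(R)$.

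There is, however, a sign slip in your last step. You correctly show $[\varphi] = [N]$ in $K_{1}(R)$, but setting $\psi := N^{-1}$ then gives $[\psi] = -[N] = -[\varphi]$, not $[\varphi]$. The paper's own proof in fact takes $\psi = N$ and concludes $\psi^{t} \chi_{1} \psi = \chi_{2}$, with $\chi_{1}$ and $\chi_{2}$ in the opposite order from the stated conclusion; for that choice $[\psi] = [\varphi]$ is immediate. So the mismatch is a typo in the statement rather than a flaw in your strategy. In the applications (Theorems~\ref{Trans1} and~\ref{Trans2}) the lemma is only invoked with $\chi_{1} = \chi_{2}$, to descend from $Sp(\chi \perp \psi_{2m})$ to $Sp(\chi \perp \psi_{2m-2})$, so the swap is immaterial there. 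Your argument is complete once you take $\psi = N$ and record $N^{t} \chi_{1} N = \chi_{2}$.

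A small aside: the generators $E_{i,\,2n+1}(\ast)$ and $E_{2n+2,\,i}(\ast)$ do \emph{not} commute --- their commutator is of type $E_{2n+2,\,2n+1}(\ast)$ --- so your ``pairwise commute'' claim is not quite right. The conclusion is unaffected: the unipotent factor $\mathrm{diag}(N^{-1},1,1)\cdot\varphi$ is still visibly in $E_{2n+2}(R)$, e.g.\ by first clearing the last row and then clearing column $2n+1$.
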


\begin{proof}
Let ${\psi''} e_{2n+2} = \varphi e_{2n+2}$ for some ${\psi''} \in E_{2n+2}(R) \cap {Sp} (\chi)$. Then we set ${\psi'} = {(\psi'')}^{-1} \varphi$. Since ${(\psi')}^{t} (\chi_{1} \perp \psi_{2}) {\psi'} = \chi_{2} \perp \psi_{2}$, the matrix corresponding to the composite $\psi: R^{2n} \xrightarrow{\psi'} R^{2n+2} \rightarrow R^{2n}$ and $\psi'$ satisfy the following conditions:

\begin{itemize}
\item ${\psi'} (e_{2n+2}) = e_{2n+2}$;
\item $\pi_{2n+1, 2n+2} {\psi'} = \pi_{2n+1, 2n+2}$;
\item ${\psi}^{t} \chi_{1} \psi = \chi_{2}$.
\end{itemize}
 
These conditions imply that $\psi$ equals ${\psi'}$ up to an element of $E_{2n+2}(R)$ and hence has determinant $1$ as well. Thus, by construction, $[\psi] = [\varphi]$ and ${\psi}^{t} \chi_{1} \psi = \chi_{2}$.
\end{proof}

In fact, the lemma above was essentially proven by Suslin and Vaserstein (cf. \cite[Lemma 5.6]{SV}), but they did not state in their lemma that the assumption $Um^{t}_{2n+2}(R) = (E_{2n+2}(R) \cap {Sp} (\chi)) e_{2n+2}$ immediately implies the last condition $[\psi] = [\varphi] \in K_{1}(R)$. For the convenience of the reader, we repeated their reasoning above. Now we can finally prove the main results of this paper:

\begin{Thm}\label{T3.6}\label{Trans1}
Let $R$ be a smooth affine algebra of odd dimension $d \geq 3$ over a field $k$ such that $c.d.(k) \leq 1$ and $d! \in k^{\times}$; if $d+1 \equiv 0~mod~4$, furthermore assume that $k$ is perfect. Let $\chi \in A_{d+1}(R)$ be an invertible alternating matrix of rank $d+1$. Then $Sp (\chi)$ acts transitively on $Um_{d+1} (R)$.
\end{Thm}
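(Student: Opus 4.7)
Write $2n=d+1$. Given $a\in Um_{2n}(R)$, we show $a$ lies in the $Sp(\chi)$-orbit of $\pi_{2n}$. Broadly: reduce $a$ to a powered row via Theorem~\ref{T3.2}; form a Suslin completion whose $K_1$-class has vanishing hyperbolic image in $W_E(R)$; lift to a symplectic matrix in higher rank by Lemma~\ref{L2.1}; descend via Lemma~\ref{L3.5}; and conclude using the $SL/E$-injectivity of Theorem~\ref{T3.4}.

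\textbf{Reduction and completion.} Apply Theorem~\ref{T3.2} to produce $\varepsilon\in E_{2n}(R)$ with $a\varepsilon=(a_1',\dots,a_{2n-1}',(a_{2n}')^l)$, $\gcd(l,\mathrm{char}(k))=1$. Choose a section $b'$ and form the Suslin completion $\beta(a\varepsilon,b')\in SL_{2n}(R)$, whose class in $K_1(R)$ equals $[\alpha_{2n}(a\varepsilon,b')]$. In the case $d+1\equiv 0\bmod 4$, Lemma~\ref{L2.2}---which is exactly where the hypotheses $-1\in(k^\times)^2$ and $k$ perfect enter---further rewrites this as $l\cdot[\alpha_{2n}(a,b)]$ in $SK_1(R)$.

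\textbf{Vanishing in $W_E(R)$ (main step).} Show that $H([\beta(a\varepsilon,b')])=0$, where $H\colon SK_1(R)\to W_E(R)$ is the Karoubi hyperbolic map; once this holds, Lemma~\ref{L2.1} furnishes $\varphi'\in SL_{2n+2m}(R)$ symplectic with respect to $\chi\perp\psi_{2m}$ and with $[\varphi']=[\beta(a\varepsilon,b')]$ in $K_1(R)$. The case split on $d+1\bmod 4$ reflects whether the Suslin bilinear form attached to $\alpha_{2n}$ is naturally skew-symmetric or symmetric: when $d+1\equiv 2\bmod 4$ the matrix $\alpha_{2n}$ is genuinely symplectic, so its $K_1$-class lies in the image of the forgetful map $K_1Sp(R)\to K_1(R)$ and hence in $\ker H$ by Karoubi exactness; when $d+1\equiv 0\bmod 4$ the Suslin matrix is merely orthogonal, and one combines Lemma~\ref{L2.2} with the motivic identification $[Q_{2n-1},Q_{2n-1}]_{\mathbb{A}^1,\bullet}\cong GW(k)$ to absorb the symmetric obstruction into a hyperbolic summand, again killing it in $W_E(R)$.

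\textbf{Descent, conclusion, and main obstacle.} Since $2n+2m\geq d+2$, \cite[Lemma~5.5]{SV} gives $Um^t_{2n+2m}(R)=Sp(\chi\perp\psi_{2m})e_{2n+2m}$, so the hypothesis of Lemma~\ref{L3.5} holds at every stage of an iterated rank-by-$2$ descent, producing $\varphi''\in Sp(\chi)\subset SL_{2n}(R)$ with $[\varphi'']=[\beta(a\varepsilon,b')]$ in $K_1(R)$. Theorem~\ref{T3.4} then yields $\beta(a\varepsilon,b')=\varphi''\varepsilon''$ for some $\varepsilon''\in E_{2n}(R)$, so $a\varepsilon$ and $\pi_{2n}\varphi''\in Sp(\chi)\cdot\pi_{2n}$ lie in the same $E_{2n}(R)$-orbit; iterating the lift-and-descend scheme on the residual elementary factor places $a$ itself into $Sp(\chi)\cdot\pi_{2n}$. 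The principal obstacle is the vanishing in the previous step, where the arithmetic in the case $d+1\equiv 0\bmod 4$ (and hence the hypotheses on $k$) is essential; a secondary subtlety is that the final absorption of the elementary factor must not re-introduce a non-trivial class in $W_E(R)$, for which the stability of $Sp$-transitivity in rank $\geq d+2$ is precisely the right tool.
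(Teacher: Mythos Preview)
Your overall strategy matches the paper's, but two steps contain genuine gaps.

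The vanishing argument in the case $d+1\equiv 0\bmod 4$ is, as written, both imprecise and potentially wrong. Theorem~\ref{T3.2} only hands you \emph{some} exponent $l$ coprime to $\mathrm{char}(k)$; you then rewrite the class as $l\cdot[\alpha_{2n}(a',b'')]$ via Lemma~\ref{L2.2}. But if $l$ happens to be odd this need not die under $H$. The ingredient you omit is that $W_E(Q_{2(d+1)-1})\cong\mathbb{Z}/2\mathbb{Z}$ (cf.\ \cite[Proposition~2.7]{Sy2}), so an \emph{even} multiplier is required to kill $H([\alpha_{d+1}(x,y)])$ universally on $Q_{2(d+1)-1}$ before pulling back to $R$. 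The paper arranges this by first transforming the row all the way to $(a_1,\dots,a_{d+1}^{d!^2})$, setting $a'=(a_1,\dots,a_{d+1}^{d!})$ so that $\beta_{d+1}(a',b')$ genuinely completes the given row, and then applying Lemma~\ref{L2.2} with the even multiplier $d!$. Your phrase ``absorb the symmetric obstruction into a hyperbolic summand'' is not an argument. (A related slip: in Suslin's convention $\beta(c,b)$ has first row $(c_1,\dots,c_{2n}^{(2n-1)!})$, so your $\beta(a\varepsilon,b')$ is not a completion of $a\varepsilon$; this is exactly why the exponent must be prepared in advance.)

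The final ``absorption of the elementary factor'' is also not achieved by ``iterating the lift-and-descend scheme''; there is nothing left to iterate once you are at rank $d+1$. After Theorem~\ref{T3.4} gives $\varphi\psi^{-1}\in E_{d+1}(R)$ with $\psi\in Sp(\chi)$ and $\varphi$ a completion of the row, the missing tool is the unconditional equality $\pi_{1,d+1}E_{d+1}(R)=\pi_{1,d+1}\bigl(E_{d+1}(R)\cap Sp(\chi)\bigr)$ from \cite[Lemma~5.5]{SV} applied \emph{at rank $d+1$ itself}: this writes $\pi_{1,d+1}\varphi\psi^{-1}=\pi_{1,d+1}\psi'$ with $\psi'\in E_{d+1}(R)\cap Sp(\chi)$, whence $a=\pi_{1,d+1}\psi'\psi\in\pi_{1,d+1}Sp(\chi)$. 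You invoked \cite[Lemma~5.5]{SV} only in ranks $\geq d+3$ for the descent, but it is this final application at rank $d+1$ that closes the proof.
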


\begin{proof}
First of all, we may assume that $k$ is infinite: If $k$ is finite, then $E_{d+1} (R)$ acts transitively on the left on $Um_{d+1}^{t}(R)$ and transitively on the right on $Um_{d+1}(R)$ (cf. \cite[Corollary 17.3]{SV} and \cite[Theorem 7.2]{SV}). It follows from \cite[Lemma 5.5]{SV} that $Sp({\chi}^{-1})$ acts transitively on the left on $Um_{d+1}^{t} (R)$. By transposition, this implies that $Sp(\chi)$ acts transitively on the right on $Um_{d+1}(R)$. So let us now assume that $k$ is infinite.\\
Let $a = (a_{1},...,a_{d+1}) \in Um_{d+1}(R)$. Now let us first assume that there is $\varphi \in SL_{d+1}(R)$ with first row $a$ such that its class $[\varphi] \in K_{1} (R)$ lies in the image of the forgetful map $K_{1}{Sp} (R) \xrightarrow{f} K_{1} (R)$. By Lemma \ref{L2.1}, this implies that there is $\psi \in Sp(\chi \perp \psi_{2m})$ for some $m \geq 0$ such that $[\psi]=[\varphi] \in K_1 (R)$. Since $E_{d+1+2n}(R) \cap Sp(\chi \perp \psi_{2n})$ acts transitively on $Um_{d+1+2n} (R)$ for $n \geq 1$ by \cite[Chapter IV, Theorem 3.4]{HB} and \cite[Lemma 5.5]{SV}, it then follows from Lemma \ref{L3.5} that we can assume that $m = 0$.\\
As the homomorphism $SL_{d+1} (R)/E_{d+1} (R) \rightarrow SK_{1} (R)$ is injective in view of the stability results of Rao-van-der-Kallen, it follows that $\varphi {\psi}^{-1} \in E_{d+1} (R)$. Since by \cite[Lemma 5.5]{SV} the equality $\pi_{1,d+1} E_{d+1} (R) = \pi_{1,d+1} ({E}_{d+1} (R) \cap Sp(\chi))$ holds, there is $\psi' \in {E}_{d+1} (R) \cap Sp (\chi)$ such that $\pi_{1,d+1}\varphi {\psi}^{-1} = \pi_{1,d+1} \psi'$. In particular, $a = \pi_{1,d+1} \varphi = \pi_{1,d+1} \psi' \psi$ lies in the orbit of $\pi_{1,d+1}$ under the action of $Sp (\chi)$.\\
Altogether, we have established the following statement in the previous paragraphs: For $a = (a_{1},...,a_{d+1}) \in Um_{d+1}(R)$, it suffices to show that there is $\varphi \in SL_{d+1}(R)$ with first row $a$ such that its class $[\varphi] \in K_{1} (R)$ lies in the image of the forgetful map $K_{1}{Sp} (R) \xrightarrow{f} K_{1} (R)$.\\
But by the implicit results of Suslin, any unimodular row of length $d+1$ can be transformed via elementary matrices to a row of the form $a = (a_{1},...,a_{d+1}^{2{d!}^{2}})$. Thus, by the reduction step in the previous paragraph, it suffices to consider rows of the latter type.\\
So let $a = (a_{1},...,a_{d+1}^{2{d!}^{2}})$ be a row of this type. Furthermore, we let $b'$ be a section of the row $a' = (a_{1},...,a_{d+1}^{2d!})$. The matrix $\beta_{d+1}(a',b') \in SL_{d+1}(R)$ is a completion of $a$. If $d+1 \equiv 2~mod~4$, it is well-known that $[\beta_{d+1}(a',b')]$ lies in the image of the forgetful map $K_{1}{Sp} (R) \xrightarrow{f} K_{1} (R)$ (cf. \cite[Proposition 3.3.3]{AF}); this proves the theorem in case $d+1 \equiv 2~mod~4$.\\
So let us henceforth assume that $d+1 \equiv 0~mod~4$ holds. In this case, one knows that $W_E (Q_{2(d+1)-1}) = \mathbb{Z}/2\mathbb{Z}$ by \cite[Proposition 2.7]{Sy2} and Lemma \ref{L2.2} shows that $[\beta_{d+1} (x',y')] \in SK_{1}(Q_{2(d+1)-1})$ is a $d!$-fold multiple of an element in $SK_{1}(Q_{2(d+1)-1})$, where $x'=(x_{1},...,x_{d},x_{d+1}^{2d!})$ and $y'$ denotes a section of $x'$; as a matter of fact, if $-1 \in {(k^{\times})}^{2}$, Lemma \ref{L2.2} even shows that $2d! \cdot [\beta_{d+1} (x,y)] = [\beta_{d+1} (x',y')] \in SK_{1}(Q_{2(d+1)-1})$, where $x=(x_{1},...,x_{d+1})$ and $y=(y_{1},...,y_{d+1})$. But since $d!$ is even and $W_E (Q_{2(d+1)-1})$ is $2$-torsion, one obtains $H([\beta_{d+1}(x',y')]) = 0$. Now let $a''=(a_{1},...,a_{d+1}) \in Um_{d+1}(R)$ with section $b''$ and consider the morphism $\varphi: Spec(R) \rightarrow Q_{2(d+1)-1}, (x,y) \mapsto (a'',b'')$. The diagram
\begin{center}
$\begin{xy}
  \xymatrix{
      SK_1 (Q_{2(d+1)-1}) \ar[r]^{H} \ar[d]_{\varphi^{\ast}}    &   W_E (Q_{2(d+1)-1}) \ar[d]^{\varphi^{\ast}}  \\
      SK_{1} (R) \ar[r]_{H}             &   W_E (R)   
  }
\end{xy}$
\end{center}
is obviously commutative and the homomorphism $\varphi^{\ast}: SK_{1} (Q_{2(d+1)-1}) \rightarrow SK_{1}(R)$ sends $[\beta_{d+1} (x',y')]$ to $[\beta_{d+1} (a',b')]$. Hence pulling back the equality $H([\beta_{d+1}(x',y')]) = 0 \in W_E (Q_{2(d+1)-1})$ along the morphism $\varphi: Spec(R) \rightarrow Q_{2(d+1)-1}, (x,y) \mapsto (a'',b'')$, proves that $H([\beta_{d+1}(a',b')]) = 0 \in W_E (R)$. By exactness of the Karoubi periodicity sequence, this implies that $[\beta_{d+1}(a',b')]$ lies in the image of the forgetful map $K_{1}{Sp} (R) \xrightarrow{f} K_{1} (R)$; this proves the theorem in case $d+1 \equiv 0~mod~4$.
\end{proof}

\begin{Rem}\label{R3.7}
The proof of the previous theorem would work for a reduced (not necessarily smooth) affine algebra $R$ of odd dimension $d \geq 3$ over an algebraically closed field $k$ with $d! \in k^{\times}$ as well if the stability result for $SK_1$, i.e., the injectivity of $SL_{d+1}(R)/E_{d+1}(R) \rightarrow SK_{1}(R)$, was known for such general algebras.
\end{Rem}

\begin{Thm}\label{T3.8}\label{Trans2}
Let $R$ be a smooth affine algebra of even dimension $d \geq 4$ over an algebraically closed field $k$ with $d! \in k^{\times}$. Let $\chi \in A_{d}(R)$ be an invertible alternating matrix of rank $d$. Then $Sp (\chi)$ acts transitively on $Um_{d} (R)$.
\end{Thm}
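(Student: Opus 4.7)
The plan is to follow the strategy of the proof of Theorem \ref{T3.6} verbatim, with $d$ replacing $d+1$ throughout and with Theorem \ref{T3.3} taking the place of Theorem \ref{T3.2}. Since $k$ is algebraically closed of characteristic prime to $d!$, we automatically have $c.d.(k) = 0$, $k$ perfect, and $-1 \in (k^{\times})^{2}$, so all the hypotheses needed below---in particular those of Lemma \ref{L2.2} and the algebraically closed case of Theorem \ref{T3.4}---are available.

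Given $a = (a_{1},\ldots,a_{d}) \in Um_{d}(R)$, I would first reduce, as in the first two paragraphs of the proof of Theorem \ref{T3.6}, to exhibiting some $\varphi \in SL_{d}(R)$ with first row $a$ such that the class $[\varphi] \in K_{1}(R)$ lies in the image of the forgetful map $f: K_{1}Sp(R) \to K_{1}(R)$. The argument is verbatim: apply Lemma \ref{L2.1} to produce $\psi \in Sp(\chi \perp \psi_{2m})$ with $[\psi] = [\varphi]$ for some $m \geq 0$; iterate Lemma \ref{L3.5} (which applies because $E_{d+2n}(R) \cap Sp(\chi \perp \psi_{2n})$ acts transitively on $Um_{d+2n}^{t}(R)$ for every $n \geq 1$ by \cite[Chapter IV, Theorem 3.4]{HB} combined with \cite[Lemma 5.5]{SV}) to descend to $m = 0$; invoke the algebraically closed case of Theorem \ref{T3.4} to conclude $\varphi\psi^{-1} \in E_{d}(R)$; and finally absorb this elementary factor into $Sp(\chi)$ via \cite[Lemma 5.5]{SV}.

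To construct such a $\varphi$, I would use Theorem \ref{T3.3} (possibly iterated) to transform $a$ elementarily to the form $(a_{1},\ldots,a_{d}^{((d-1)!)^{2}})$; the exponent is coprime to the characteristic of $k$ because $d! \in k^{\times}$. Setting $a' = (a_{1},\ldots,a_{d}^{(d-1)!})$ and choosing a section $b'$ of $a'$, the Suslin completion $\varphi := \beta_{d}(a',b') \in SL_{d}(R)$ then has first row $a$ by construction. Since $d$ is even, one splits according to $d \bmod 4$. If $d \equiv 2 \bmod 4$, then $[\beta_{d}(a',b')] = [\alpha_{d}(a',b')]$ is classically known to lie in the image of $f$ (this is the same fact used for $d+1 \equiv 2 \bmod 4$ in Theorem \ref{T3.6}). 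If $d \equiv 0 \bmod 4$, one argues as in the last paragraph of the proof of Theorem \ref{T3.6}: by \cite[Proposition 2.7]{Sy2} one has $W_{E}(Q_{2d-1}) = \mathbb{Z}/2\mathbb{Z}$; Lemma \ref{L2.2} yields $(d-1)! \cdot [\beta_{d}(a'',b'')] = [\beta_{d}(a',b')]$ in $K_{1}$, where $a'' = (a_{1},\ldots,a_{d})$; and since $(d-1)!$ is even for $d \geq 4$ while $W_{E}(Q_{2d-1})$ is $2$-torsion, pulling back along $Spec(R) \to Q_{2d-1}$ forces $H([\beta_{d}(a',b')]) = 0$ in $W_{E}(R)$. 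Exactness of the Karoubi periodicity sequence then places $[\varphi]$ in the image of $f$.

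The main delicate point I anticipate is arranging the specific exponent $((d-1)!)^{2}$ in the Suslin reduction step, since Theorem \ref{T3.3} as stated only provides some integer $l \geq 1$ coprime to the characteristic; producing the precise exponent required for $\beta_{d}$ to complete $a$ needs either a careful iteration of Theorem \ref{T3.3} or a short additional argument exploiting $d! \in k^{\times}$. Everything else is a line-by-line adaptation of the proof of Theorem \ref{T3.6}, and the stability input of Theorem \ref{T3.4} is precisely the place where algebraic closedness of $k$ is used in an essential way (cf.\ Remark \ref{R3.7}).
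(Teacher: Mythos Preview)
Your proposal is correct and follows the paper's proof essentially verbatim; the only difference is that you use the exponent $((d-1)!)^{2}$ and $a'=(a_{1},\ldots,a_{d}^{(d-1)!})$ whereas the paper writes $(d!)^{2}$ and $a'=(a_{1},\ldots,a_{d}^{d!})$, and your choice is in fact the one that makes $\beta_{d}(a',b')$ literally complete $a$ (the paper's exponents are a harmless slip carried over from the $d+1$ case). The ``delicate point'' you flag about achieving a specific exponent is handled in the paper exactly as you suggest, by iterating the Fasel--Rao--Swan reduction, and is likewise left implicit there.
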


\begin{proof}
The proof is very analogous to the proof of the previous theorem. For the convenience of the reader, we give a precise proof with all necessary adjustments to the previous one:\\
Let $a = (a_{1},...,a_{d}) \in Um_{d}(R)$. First assume as in the previous proof that there is $\varphi \in SL_{d}(R)$ with first row $a$ such that its class $[\varphi] \in K_{1} (R)$ lies in the image of the forgetful map $K_{1}{Sp} (R) \xrightarrow{f} K_{1} (R)$. Again by Lemma \ref{L2.1}, there is $\psi \in Sp(\chi \perp \psi_{2m})$ for some $m \geq 0$ such that $[\psi]=[\varphi] \in K_1 (R)$. But $E_{d+2n}(R) \cap Sp(\chi \perp \psi_{2n})$ acts transitively on $Um_{d+2n} (R)$ for $n \geq 1$ by \cite[Chapter IV, Theorem 3.4]{HB} and \cite[Lemma 5.5]{SV}, so we can assume that $m = 0$ in view of Lemma \ref{L3.5}.\\
The stability result of Fasel-Rao-Swan asserts that $SL_{d} (R)/E_{d} (R) \rightarrow SK_{1} (R)$ is injective. Therefore $\varphi {\psi}^{-1} \in E_{d} (R)$. Since $\pi_{1,d} E_{d} (R) = \pi_{1,d} ({E}_{d} (R) \cap Sp(\chi))$ holds in view of \cite[Lemma 5.5]{SV}, there is $\psi' \in {E}_{d} (R) \cap Sp (\chi)$ such that $\pi_{1,d}\varphi {\psi}^{-1} = \pi_{1,d} \psi'$. As a consequence, $a = \pi_{1,d} \varphi = \pi_{1,d} \psi' \psi$ lies in the orbit of $\pi_{1,d}$ under the right action of $Sp (\chi)$.\\
Altogether, we have again established the following reduction: For any unimodular row $a = (a_{1},...,a_{d}) \in Um_{d}(R)$, it suffices to show that there is $\varphi \in SL_{d}(R)$ with first row $a$ such that its class $[\varphi] \in K_{1} (R)$ lies in the image of the forgetful map $K_{1}{Sp} (R) \xrightarrow{f} K_{1} (R)$.\\
The implicit result of Fasel-Rao-Swan (cf. Theorem \ref{T3.3}) asserts that any unimodular row of length $d$ can be transformed via elementary matrices to a row of the form $a = (a_{1},...,a_{d}^{{d!}^{2}})$. As in the proof of the previous theorem, it hence suffices to consider rows of the latter type.\\
So let again $a = (a_{1},...,a_{d}^{{d!}^{2}})$ be a row of this type. Moreover, we let $b'$ be a section of the row $a' = (a_{1},...,a_{d}^{d!})$. The matrix $\beta_{d}(a',b') \in SL_{d}(R)$ is a completion of $a$. If $d \equiv 2~mod~4$, it is well-known that $[\beta_{d}(a',b')]$ lies in the image of the forgetful map $K_{1}{Sp} (R) \xrightarrow{f} K_{1} (R)$ (cf. \cite[Proposition 3.3.3]{AF}); this proves the theorem in case $d \equiv 2~mod~4$.\\
Therefore we can assume that $d \equiv 0~mod~4$, i.e., $d$ is divisible by $4$. Again, one knows that $W_E (Q_{2d-1}) \cong \mathbb{Z}/2\mathbb{Z}$ in this case. Furthermore, Lemma \ref{L2.2} shows that $d! \cdot [\beta_{d} (a'',b'')] = [\beta_{d} (a',b')] \in K_{1}(R)$, where $a''=(a_{1},...,a_{d})$ and $b''$ is a section of $a''$. But since $d!$ is even and $W_E (Q_{2d-1})$ is $2$-torsion, one obtains $d! \cdot H([\beta_{d}(x,y)]) = 0$. Hence pulling back this equality along the morphism $Spec(R) \rightarrow Q_{2d-1}, (x,y) \mapsto (a'',b'')$, yields $H([\beta_{d}(a',b')]) = 0 \in W_E (R)$. The exactness of the Karoubi periodicity sequence implies again that $[\beta_{d}(a',b')]$ lies in the image of the forgetful map $K_{1}{Sp} (R) \xrightarrow{f} K_{1} (R)$; this finishes the proof of the theorem in case $d \equiv 0~mod~4$.
\end{proof}

\begin{Thm}\label{T3.9}\label{Bij}
Let $R$ be a smooth affine algebra of dimension $d \geq 3$ over an infinite field $k$ with $d! \in k^{\times}$. Let $\theta_{0}: R \xrightarrow{\cong} \det (R^2)$ be a fixed trivialization of $\det (R^2)$. Assume that one of the following conditions is satisfied:
\begin{itemize}
\item $d=3$, $k$ is perfect and $c.d.(k) \leq 1$
\item $d=4$ and $k$ is algebraically closed
\end{itemize}
Then $V_{\theta_{0}}: Um_{3}(R)/SL_{3}(R) \xrightarrow{\cong} \tilde{V}_{SL}(R)$ is a bijection.
\end{Thm}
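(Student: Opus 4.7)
The plan is to apply the sufficient criterion in Proposition \ref{Criterion} with $P_0 = R^2$, so that $P_n = R^n$ and the two hypotheses to verify become
\begin{enumerate}
\item $SL_5(R)$ acts transitively on $Um_5(R)$;
\item for every invertible alternating matrix $\chi$ of rank $4$ over $R$, $Sp(\chi)$ acts transitively on $Um_4^t(R)$ (equivalently, on $Um_4(R)$, since the left and right actions are related by transposition).
\end{enumerate}
Both conditions will be checked separately in the two cases $d = 3$ and $d = 4$ using the transitivity and stability results already assembled in this section.

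In the case $d = 3$ with $-1 \in (k^{\times})^2$, $k$ perfect and $c.d.(k) \leq 1$: for (1), note $5 = d + 2$, so $E_5(R)$ (and hence $SL_5(R)$) acts transitively on $Um_5(R)$ by \cite[Chapter IV, Theorem 3.4]{HB}. For (2), $2n = 4 = d+1$ and $d+1 \equiv 0 \bmod 4$, so the hypotheses of Theorem \ref{Trans1} (second bullet of the main theorem) are met, and $Sp(\chi)$ acts transitively on $Um_4(R)$ for any $\chi \in A_4(R)$.

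In the case $d = 4$ with $k$ algebraically closed: for (1), $5 = d + 1$, and the main result of \cite{S1} gives that $SL_5(R)$ acts transitively on $Um_5(R)$. For (2), $2n = 4 = d$, so Theorem \ref{Trans2} applies (since $d! = 24$ is invertible under the standing assumption $d! \in k^{\times}$) and yields transitivity of $Sp(\chi)$ on $Um_4(R)$ for every $\chi \in A_4(R)$.

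Since the two conditions of Proposition \ref{Criterion} hold in each of the two cases, the generalized Vaserstein symbol $V_{\theta_0}: Um_3(R)/SL_3(R) \to \tilde V_{SL}(R)$ is a bijection, as claimed. There is no real obstacle: all the work has been done in Theorems \ref{Trans1} and \ref{Trans2}, and the only task here is bookkeeping to match the numerical hypotheses (the dimension $d$, the residue $d+1 \bmod 4$, and the cohomological/field conditions) to the appropriate bullets in the main theorem together with the classical transitivity result for $E_{d+2}$.
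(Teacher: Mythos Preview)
Your proof is correct and follows essentially the same approach as the paper: verify the two hypotheses of Proposition \ref{Criterion} using \cite[Chapter IV, Theorem 3.4]{HB} and \cite{S1} for the transitivity of $SL_5(R)$ on $Um_5(R)$, and Theorems \ref{Trans1} and \ref{Trans2} for the transitivity of $Sp(\chi)$ on $Um_4(R)$. The only difference is presentational---you split the cases $d=3$ and $d=4$ explicitly, while the paper treats them together in one sentence.
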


\begin{proof}
We verify the sufficient criterion given by Proposition \ref{Criterion}. By \cite[Chapter IV, Theorem 3.4]{HB} and \cite[Theorem 1]{S1} it is clear that $SL_{5}(R)$ acts transitively on $Um_{5}(R)$. The previous theorems establish in particular that $Sp (\chi^{-1})$ acts transitively on the right on $Um_{4}(R)$ for any invertible alternating matrix $\chi$ of rank $4$; by transposition, it follows that $Sp(\chi)$ acts transitively on the left on $Um_{4}^{t}(R)$. Hence the bijectivity follows immediately from Proposition \ref{Criterion}.
\end{proof}

\begin{Rem}\label{R3.10}
If one could prove the statement in Theorem \ref{T3.6} for an arbitrary reduced (not necessarily smooth) affine algebra $R$ of dimension $3$ over an algebraically closed field $k$ with $6 \in k^{\times}$, one would immediately obtain that the statement in Theorem \ref{T3.9} holds for such algebras as well. The consequence would be the following: For any reduced affine (not necessarily smooth) algebra $R$ of dimension $3$ over an algebraically closed field, the set of stably free oriented $R$-modules of rank $2$ would correspond to the group $W_{SL}(R)$. In particular, all stably free $R$-modules of rank $2$ would be free if and only if $W_{SL}(R)$ would be trivial.
\end{Rem}

\begin{Thm}\label{T3.11}
Let $R$ be an affine algebra of dimension $d=3$ over a finite field $\mathbb{F}_{q}$. Let $P_{0}$ be a projective $R$-module of rank $2$ with a trivial determinant and let $P_{n} = P_{0} \oplus R^{n-2}$ for $n \geq 3$ and $e_{4} = (0,0,1) \in P_4$. Then the equality $Sp (\chi) e_{4} = Um^t (P_{4})$ holds for any non-degenerate alternating form $\chi$ on $P_{4}$. In particular, the generalized Vaserstein symbol associated to any trivialization $\theta_{0}$ of $\det (P_{0})$ gives a bijection $V_{\theta_{0}}: Um (P_{0} \oplus R)/SL (P_{0} \oplus R) \xrightarrow{\cong} \tilde{V}_{SL} (R)$.
\end{Thm}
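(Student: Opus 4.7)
The plan is to first establish the main assertion, namely $Sp(\chi) e_4 = Um^t(P_4)$ for every non-degenerate alternating form $\chi$ on $P_4 = P_0 \oplus R^2$, and then to deduce the bijection as a direct corollary of Proposition \ref{Criterion}. The remaining hypothesis of that proposition — transitivity of $SL(P_5)$ on $Um(P_5)$ — is automatic: since $P_5$ has rank $5 = d+2$, already $E(P_5)$ acts transitively on $Um(P_5)$ by \cite[Chapter IV, Theorem 3.4]{HB} (extended from the free case to projective modules of rank $\geq d+2$).

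For the transitivity of $Sp(\chi)$, the blueprint is the one used in Theorems \ref{Trans1} and \ref{Trans2}. Fix $v \in Um^t(P_4)$. The plan is: (a) construct a completion $\varphi \in SL(P_4)$ with $\varphi e_4 = v$ whose class $[\varphi] \in K_1(R)$ lies in the image of the forgetful map $K_1 Sp(R) \to K_1(R)$, equivalently in $\ker(H)$ by Karoubi periodicity; (b) apply Lemma \ref{L2.1} to stabilize $\varphi$ to some $\psi \in Sp(\chi \perp \psi_{2m})$ with $[\psi] = [\varphi] \in K_1(R)$; (c) invoke Lemma \ref{L3.5} in combination with the transitivity of $E_{4+2n}(R) \cap Sp(\chi \perp \psi_{2n})$ on $Um^t_{4+2n}(R)$ for $n \geq 1$ (valid by \cite[Chapter IV, Theorem 3.4]{HB} and \cite[Lemma 5.5]{SV}) to descend to $m=0$; and (d) absorb the remaining elementary correction into $Sp(\chi) \cap E(P_4)$ via \cite[Lemma 5.5]{SV}, placing $v$ in the $Sp(\chi)$-orbit of $e_4$. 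Steps (b)--(d) are formal once (a) is in hand.

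The main obstacle is step (a): producing a completion of $v$ whose $K_1$-class is killed by $H$. The proofs of Theorems \ref{Trans1} and \ref{Trans2} rely on Suslin's reduction theorems (Theorems \ref{T3.1}--\ref{T3.3}), which require the base field to be infinite or algebraically closed, and on Lemma \ref{L2.2}, which presupposes $-1 \in (k^\times)^2$; both can fail over $\mathbb{F}_q$. To circumvent this I plan to exploit $c.d.(\mathbb{F}_q) \leq 1$ together with the flexible injectivity criterion from \cite{Sy2}, which permits stabilization before the Suslin-matrix step. Concretely, the image of $H\colon SK_1(R) \to W_E(R)$ for a $3$-dimensional affine $\mathbb{F}_q$-algebra will be analyzed via the Karoubi periodicity sequence, and a suitable completion (possibly after stabilizing by some $\psi_{2m}$ and adjusting inside $E$) will be shown to have trivial $H$-image. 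Working inside a projective module $P_4$ rather than $R^4$ only affects the bookkeeping: the Suslin-type completions and the relevant $K$-theoretic identifications are invariant under replacing $R^2$ by $P_0$ because $\det(P_0)$ is trivialized by $\theta_0$. This cohomological vanishing over $\mathbb{F}_q$ is the technical heart of the argument; once it is available, the remainder of the proof assembles formally, and Proposition \ref{Criterion} then yields the asserted bijection.
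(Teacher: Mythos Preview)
Your plan overlooks the decisive simplification available over finite fields, and as a result you are attempting something much harder than necessary while leaving the hard part unfinished. The paper's proof is essentially two lines: over $\mathbb{F}_q$ the group $E(P_4)$ \emph{already} acts transitively on $Um^t(P_4)$, by \cite[Corollary 17.3]{SV} (transitivity of $E_{d+1}(R)$ on $Um_{d+1}(R)$ for $d$-dimensional affine algebras over finite fields) together with its extension to projective modules \cite[Theorem 1.1]{DK}; then \cite[Lemma 2.8]{Sy}---the projective analogue of \cite[Lemma 5.5]{SV}---immediately upgrades $E(P_4)$-transitivity to $Sp(\chi)e_4 = Um^t(P_4)$. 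No $K_1$-class analysis, no Suslin matrices, no Karoubi periodicity. Proposition \ref{Criterion} then gives the bijection, with the $P_5$ hypothesis handled exactly as you say.

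Your route, modeled on Theorems \ref{Trans1} and \ref{Trans2}, has a genuine gap at step (a). You correctly observe that the Suslin reductions (Theorems \ref{T3.1}--\ref{T3.3}) and Lemma \ref{L2.2} are unavailable over $\mathbb{F}_q$, but your proposed replacement---``analyze the image of $H$ via the Karoubi periodicity sequence'' so that ``a suitable completion \dots will be shown to have trivial $H$-image''---supplies no actual mechanism for producing such a completion. In Theorems \ref{Trans1}/\ref{Trans2} this step works only because the last coordinate can be made a $(d!)^2$-th power and the relevant $W_E(Q_{2n-1})$ is $2$-torsion; you offer no substitute. Even granting (a), steps (c)--(d) invoke the injective stability $SL_{4}(R)/E_{4}(R)\hookrightarrow SK_1(R)$ from Theorem \ref{T3.4}, which is stated for \emph{smooth} $R$ with $d!\in k^\times$; Theorem \ref{T3.11} assumes neither smoothness nor $\mathrm{char}(k)\nmid 6$, so your argument would at best prove a strictly weaker statement. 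Finally, Lemmas \ref{L2.1} and \ref{L3.5} are written for free modules, and the passage to $P_4=P_0\oplus R^2$ is more than ``bookkeeping''---it is precisely what \cite[Lemma 2.8]{Sy} handles, and once you have $E(P_4)$-transitivity from \cite{SV,DK} that lemma finishes the job directly.
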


\begin{proof}
By \cite[Chapter IV, Theorem 3.4]{HB}, the group $E(P_{5})$ and hence $SL (P_{5})$ acts transitively on $Um (P_{5})$. Moreover, the group $E(P_{4})$ acts transitively on $Um^t (P_{4})$ by \cite[Corollary 17.3]{SV} and \cite[Theorem 1.1]{DK}. The first statement then follows from \cite[Lemma 2.8]{Sy}. Then Proposition \ref{Criterion} implies the second statement.
\end{proof}

We remark that the group $\tilde{V}_{SL} (R)$ is not trivial in general for an affine algebra of dimension $3$ over a finite field: Let $\mathbb{F}_{p}$ be the field with $p$ elements for a prime number $p$ with $p \equiv 1~mod~8$. We consider the polynomial $X^{8} - a$ for some element $a \in \mathbb{F}_{p}^{\times}$ which is not a square; furthermore, we let $\sqrt[8]{a}$ be a root of this polynomial in an algebraic closure of $\mathbb{F}_{p}$. Since $p \equiv 1~mod~8$, the field $\mathbb{F}_{p}$ contains all $8$th roots of unity, i.e., all zeros of the polynomial $X^{8} - 1$ over $\mathbb{F}_{p}$. In particular, by Kummer's theorem on cyclic field extensions, we see that $\mathbb{F}_{p}(\sqrt[8]{a})$ is Galois over $\mathbb{F}_{p}$ and $[\mathbb{F}_{p}(\sqrt[8]{a}):\mathbb{F}_{p}] = r$ such that $r$ divides $8$. Therefore the minimal polynomial $\mathcal{M} (\sqrt[8]{a})$ of $\sqrt[8]{a}$ over $\mathbb{F}_{p}$ has degree $1,2,4$ or $8$. But the coefficient of $\mathcal{M} (\sqrt[8]{a})$ in degree zero is a product of $8$th roots of unity (which are all in $\mathbb{F}_{p}$) and ${\sqrt[8]{a}}^{r}$. Since $a$ is not a square in $\mathbb{F}_{p}$, it follows that ${\sqrt[8]{a}}^{i} \notin \mathbb{F}_{p}$ for $i =1,2,4$ and $r$ has to be $8$. Hence $X^{8} - a$ is irreducible over $\mathbb{F}_{p}$. If we take the polynomial $X^{2} - a$ for N. Mohan Kumar's construction of stably free modules in \cite{NMK}, then we produce a smooth affine algebra $R_{\mathcal{K}}$ of dimension $3$ over $\mathbb{F}_{p}$ which admits a non-free stably free module of rank $2$. It follows from the previous theorem that $\tilde{V}_{SL} (R_{\mathcal{K}}) \neq 0$.


\begin{thebibliography}{xxxxxx}
\bibitem[A]{A} M. Arkowitz, Introduction to homotopy theory, Universitext, Springer, New York, 2011
\bibitem[AE]{AE} B. Antieau, E. Elmanto, Surveys on Recent Developments in Algebraic Geometry, Proc. Sympos. Pure Math. 95 (2017)
%\bibitem[AF1]{AF1} A. Asok, J. Fasel, Algebraic vector bundles on spheres, J. Topology 7 (2014), no. 3, 894-926
%\bibitem[AF2]{AF2} A. Asok, J. Fasel, A cohomological classification of vector bundles on smooth affine threefolds, Duke Math. Journal 163 (2014), no. 14, 2561-2601
%\bibitem[AF3]{AF3} A. Asok, J. Fasel, Splitting vector bundles outside the stable range and homotopy theory of punctured affine spaces, J. Amer. Math. Soc. 28 (2015), no. 4, 1031-1062
\bibitem[AF]{AF} A. Asok, J. Fasel, An explicit $KO$-degree map and applications, J. Topology 10 (2017), 268-300
\bibitem[AFH]{AFH} A. Asok, J. Fasel, M. J. Hopkins, Algebraic vector bundles and $p$-local $\mathbb{A}^1$-homotopy theory, preprint available at https://arxiv.org/abs/2008.03363
%\bibitem[AHW]{AHW} A. Asok, M. Hoyois, M. Wendt, Affine representability results in $\mathbb{A}^{1}$-homotopy theory II: principal bundles and homogeneous spaces, Geom. Top. 22 (2018), no. 2, 1181-1225
%\bibitem[As]{As} A. Asok, Motives of some acyclic varieties, Homology, Homotopy Appl. 13 (2011), no. 2, 329-335
%\bibitem[A\O]{AO} A. Asok, P. A. {\O}stv{\ae}r, $\mathbb{A}^{1}$-homotopy theory and contractible varieties: a survey, arXiv:1903.07851, 2019
%\bibitem[B]{B} S. M. Bhatwadekar, A cancellation theorem for projective modules over affine algebras over $C_1$-fields, J. Pure Appl. Algebra 1-3 (2003), 17-26
\bibitem[BBR]{BBR} A. Bak, R. Basu, R. A. Rao, Local-global principle for transvection groups, Proceedings of the American Mathematical Society 138 (2010), 1191-1204
\bibitem[BCR]{BCR} R. Basu, P. Chattopadhyay, R. A. Rao, Some remarks on symplectic injective stability, Proceedings of the American Mathematical Society 139 (2011), 2317-2325
%\bibitem[BO]{BO} S. Bloch, A. Ogus, Gersten's conjecture and the homology of schemes, Ann. Sci. \'Ec. Norm. Super. 7 (1975), 181-201
%\bibitem[CR]{CR} P. Chattopadhyay, R. A. Rao, Elementary symplectic orbits and improved $K_{1}$-stability, J. $K$-theory 7 (2011), no. 2, 389-403
\bibitem[DK]{DK} A. M. Dhorajia, M. K. Keshari, A note on cancellation of projective modules, J. Pure Appl. Algebra 1 (2012), vol. 216, 126-129
%\bibitem[DP\O]{DPO} A. Dubouloz, S. Pauli, P. A. {\O}stv{\ae}r, $\mathbb{A}^{1}$-contractibility of affine modifications, arXiv:1805.08959, 2018
\bibitem[F]{F} J. Fasel, Some remarks on orbit sets of unimodular rows, Comment. Math. Helv. 86 (2011), no. 1, 13-39
%\bibitem[F]{F} J. Fasel, Stably free modules over smooth affine threefolds, Duke Math. Journal 156 (2011), no. 1, 33-49
\bibitem[FRS]{FRS} J. Fasel, R. A. Rao and R. G. Swan, On stably free modules over affine algebras, Publ. Math. Inst. Hautes \'Etudes Sci. 116 (2012), 223-243
%\bibitem[FS1]{FS1} J. Fasel, V. Srinivas, A vanishing theorem for oriented intersection multiplicities, Math. Res. Lett. 15 (2008), 447-458
%\bibitem[FS2]{FS2} J. Fasel, V. Srinivas, Chow-Witt groups and Grothendieck-Witt groups of regular schemes, Adv. Math. 221 (2009), 302-329
\bibitem[G]{G} A. Gupta, Optimal injective stability for the symplectic $K_{1}{Sp}$ group, J. Pure Appl. Algebra 219 (2015), 1336-1348
\bibitem[HB]{HB} H. Bass, Algebraic K-theory, Benjamin, New York, 1968
\bibitem[Ho]{Ho} M. Hovey, Model categories, volume 63 of Mathematical Surveys and Monographs, American Mathematical Society, Providence, 1999
%\bibitem[JH]{JH} J. Hornbostel, $\mathbb{A}^{1}$-representability of Hermitian $K$-theory and Witt groups, Topology 44 (2005), 661-687
%\bibitem[Kr]{Kr} M. Krusemeyer, Skewly completable rows and a theorem of Swan and Towber, Comm. Algebra 4 (1976), 657-663
%\bibitem[Li]{Li} H. Lindel, On the Bass-Quillen conjecture concerning projective modules over polynomial rings, Invent. Math. 65 (1981), 319-323
%\bibitem[M]{M} M. P. Murthy, Projective $A[X]$-modules, J. Lond. Math. Soc. 41 (1966), 453-456
%\bibitem[MK1]{MK1} M. Karoubi, P\'{e}riodicit\'{e} de la $K$-th\'{e}orie hermitienne, in Algebraic $K$-theory, III: Hermitian $K$-Theory and Geometric Applications (Proc. Conf., Battelle Memorial Inst., Seattle, Wash., 1972), Lecture Notes in Math., vol. 343, 301-411, Springer, Berlin, 1973
%\bibitem[MK2]{MK2} M. Karoubi, Le th\'{e}or\`{e}me fondamental de la $K$-th\'{e}orie hermitienne, Ann. of Math. 112 (1980), 259-282
\bibitem[Mo]{Mo} F. Morel, $\mathbb{A}^{1}$-algebraic topology over a field, volume 2052 of Lecture Notes in Mathematics, Springer, Heidelberg, 2012
%\bibitem[MS1]{MS1} M. Schlichting, Hermitian $K$-theory of exact categories, K-Theory 5 (2010), 105-165
%\bibitem[MS2]{MS2} M. Schlichting, The Mayer-Vietoris principle for Grothendieck-Witt groups of schemes, Invent. Math. 179 (2010), 349-433
%\bibitem[MS3]{MS3} M. Schlichting, Hermitian $K$-theory, derived equivalences and Karoubi's fundamental theorem, J. Pure Appl. Algebra 221 (2017), no. 7, 1729-1844
\bibitem[MV]{MV} F. Morel, V. Voevodsky, $\mathbb{A}^{1}$-homotopy theory of schemes, Inst. Hautes \'Etudes Sci. Publ. Math. 90 (1999), 45-143 
\bibitem[NMK]{NMK} N. M. Kumar, Stably free modules, American Journal of Mathematics 107 (1985), no.6, 1439-1444
%\bibitem[R]{R} R. A. Rao, The Bass-Quillen conjecture in dimension three but characteristic $\neq 2,3$ via a question of A. Suslin, Invent. Math. 93 (1988), 609-618
\bibitem[RvdK]{RvdK} R. A. Rao, W. van der Kallen, Improved stability for $SK_{1}$ and $WMS_{d}$ of a non-singular affine algebra, Ast\'{e}risque 226 (1994), 411-420
%\bibitem[S]{S} J.-P. Serre, Cohomologie Galoisienne, Lecture Notes in Mathematics, vol.5, 5th ed. Springer, Berlin, 1994
\bibitem[S1]{S1} A. A. Suslin, A cancellation theorem for projective modules over algebras, Dokl. Akad. Nauk SSSR 236 (1977), 808-811
\bibitem[S2]{S2} A. A. Suslin, On stably free modules, Math. U.S.S.R. Sbornik (1977), 479-491
\bibitem[S3]{S3} A. A. Suslin, Mennicke symbols and their applications in the $K$-theory of fields, in Algebraic $K$-theory, Part I (Oberwolfach, 1980), Lecture Notes in Math., vol. 966, 334-356, Springer, Berlin-New York, 1982
\bibitem[S4]{S4} A. A. Suslin, Cancellation over affine varieties, Zap. Nauchn. Sem. LOMI 114 (1982), 187-195
%\bibitem[Sr]{Sr} V. Srinivas, Torsion $0$-cycles on affine varieties in characteristic $p$, J. Algebra 120 (1989), 428-432
%\bibitem[ST]{ST} M. Schlichting, G. S. Tripathi, Geometric models for higher Grothendieck-Witt groups in $\mathbb{A}^{1}$-homotopy theory, Math. Ann. 362 (2015), no. 3-4, 1143-1167
\bibitem[SV]{SV} A. A. Suslin, L. N. Vaserstein, Serre's problem on projective modules over polynomial rings, and algebraic K-theory, Izv. Akad. Nauk. SSSR Ser. Mat. 40 (1976), 993-1054
%\bibitem[SwT]{SwT} R. G. Swan, J. Towber, A class of projective modules which are nearly free, J. Algebra 36 (1975), 427-434
\bibitem[Sy]{Sy} T. Syed, A generalized Vaserstein symbol, Annals of K-Theory 4 (2019), no. 4, 671-706
\bibitem[Sy2]{Sy2} T. Syed, The cancellation of projective modules of rank 2 with a trivial determinant, Algebra \& Number Theory 15 (2021), no. 1, 109-140
\bibitem[V]{V} L. N. Vaserstein, On the stabilization of the general linear group over a ring, Math. USSR Sbornik 8 (1969), 383-400
%\bibitem[W]{W} M. Wendt, Rationally trivial torsors in $\mathbb{A}^{1}$-homotopy theory, J. K-theory 7 (2011), 541-572
%\bibitem[We]{We} C. Weibel, The K-book: An introduction to algebraic $K$-theory, Graduate Studies in Math. vol. 145, American Mathematical Society, 2013
\end{thebibliography}
\end{document}